\newcommand{\N}{\mathbb{N}}  
\newcommand{\R}{\mathbb{R}}  
\renewcommand{\P}{\mathbb{P}}  
\newcommand{\D}{\mathbf{D}}
\newcommand{\E}{\mathbb{E}}  
\newcommand{\kernel}{\Pi}  
\newcommand{\chain}{\mathbf{X}}  
\newcommand{\initMeasure}{\nu}  
\newcommand{\invariantMeasure}{\mu}  
\newcommand{\roc}{\mathrm{ROC}}
\newcommand{\auc}{\mathrm{AUC}}
\newcommand{\githubRepo}{\url{https://github.com/statsmarkov/depth_markov}}
\NewDocumentCommand{\borel}{O{\Set}}{\mathcal{B}(#1)}  
\newcommand{\Set}{E}  
\newcommand{\SgB}{\borel[\Set]}  
\newcommand{\SetSeq}{\mathbb{T}}  
\newcommand{\SgE}{\mathcal{T}}  
\newcommand{\Depth}{D}  
\NewDocumentCommand{\DepthSeq}{O{\kernel}}{\Depth_{#1}}  
\newcommand{\Probabilities}{\mathcal{P}}  
\newcommand{\x}{\mathbf{x}}
\newcommand{\y}{\mathbf{y}}
\newcommand{\PropIndependenceInitDistrText}{(M0)}  
\newcommand{\PropIndependenceInitDistr}{\hyperref[prop:PropIndependenceInitDistr]{\PropIndependenceInitDistrText}}
\newcommand{\PropGeneralInvarianceText}{(P1)}  
\newcommand{\PropGeneralInvariance}{\hyperref[prop:PropGeneralInvariance]{\PropGeneralInvarianceText}}
\newcommand{\PropMarkovInvarianceText}{(M1)}  
\newcommand{\PropMarkovInvariance}{\hyperref[prop:PropMarkovInvariance]{\PropMarkovInvarianceText}}
\newcommand{\PropVanishingBoundaryText}{(P2)}  
\newcommand{\PropVanishingBoundary}{\hyperref[prop:PropVanishingBoundary]{\PropVanishingBoundaryText}}
\newcommand{\PropMarkovVanishingBoundaryText}{(M2)}  
\newcommand{\PropMarkovVanishingBoundary}{\hyperref[prop:PropMarkovVanishingBoundary]{\PropMarkovVanishingBoundaryText}}
\newcommand{\PropMaximalityCenterText}{(P3)}  
\newcommand{\PropMaximalityCenter}{\hyperref[prop:PropMaximalityCenter]{\PropMaximalityCenterText}}
\newcommand{\PropMonotonicityText}{(P4)}  
\newcommand{\PropMonotonicity}{\hyperref[prop:PropMonotonicity]{\PropMonotonicityText}}
\newcommand{\PropContinuityDepthText}{(P5)}  
\newcommand{\PropContinuityDepth}{\hyperref[prop:PropContinuityDepth]{\PropContinuityDepthText}}
\newcommand{\PropContinuityInProbUniformText}{(P6)}  
\newcommand{\PropContinuityInProbUniform}{\hyperref[prop:PropContinuityInProbUniform]{\PropContinuityInProbUniformText}}
\newcommand{\PropContinuityInProbText}{(P6')}  
\newcommand{\PropContinuityInProb}{\hyperref[prop:PropContinuityInProb]{\PropContinuityInProbText}}
\newcommand{\PropLipschitzText}{(P6'')}  
\newcommand{\PropLipschitz}{\hyperref[prop:PropLipschitz]{\PropLipschitzText}}
\newcommand{\PropWeakFellerText}{(P7)}  
\newcommand{\PropWeakFeller}{\hyperref[prop:PropWeakFeller]{\PropWeakFellerText}}
\begin{document}
\title{Anomaly Detection based on Markov Data: \\A Statistical Depth Approach}
%
%

\author{Carlos Fernández\inst{1}\orcidID{0000-0002-8577-865X} \and
    Stephan Clémençon\inst{1}\orcidID{0000-0002-5879-9500}}
\authorrunning{C. Fernández and S. Clémençon}
%
\institute{LTCI, Télécom Paris, Institut Polytechnique de Paris\\
    \email{\{fernandez,stephan.clemencon\}@telecom-paris.fr}}
\maketitle              
\begin{abstract}
  The purpose of this article is to extend
  the notion of statistical depth to the
  case of sample paths of a Markov chain. Initially introduced to
  define a center-outward ordering of points in the support of a multivariate
  distribution, depth functions permit to generalize the notions of quantiles and
  (signed) ranks for observations in $\mathbb{R}^d$ with $d>1$, as well as statistical
  procedures based on such quantities.
  Here we develop a general theoretical
  framework for evaluating the depth of a Markov sample path and recovering it
  statistically from an estimate of its transition probability with (non-) asymptotic
  guarantees. We also detail some of its applications, focusing particularly on unsupervised anomaly detection.
   Beyond the theoretical analysis carried
   out, numerical experiments are displayed, providing
   empirical evidence of the relevance of the novel concept we introduce here to quantify
   the degree of abnormality of Markov paths of variable length.
  \keywords{Anomaly detection \and Statistical Depth \and Markov chains}
\end{abstract}
%
%
%

\section{Introduction}\label{sec:intro}
The concept of Markov chain provides a very popular time-series model, ubiquitous in
applications (\textit{e.g.} systems engineering, bioinformatics, mathematical finance,
operations research) to describe the dynamics governing the evolution of random time
phenomena in a parsimonious way, namely systems with a past, whose future distribution
depends on the most recent part only.

This paper is devoted to the extension of the \textit{statistical depth} concept,
originally introduced in \cite{Tukey1975} to overcome the lack of natural order on
$\mathbb{R}^d$ for $d\geq 2$, to sample paths with variable length of a Markov chain
$\mathbf{X}=(X_n)_{n\in \mathbb{N}}$. Whereas recent alternative approaches for
curve/functional data are essentially based on topological/metric properties of the path
space (see \textit{e.g.} \cite{StaermanMozharovskyiClemencon2020,GeenensNietoFrancisci2023})
and generally produce too large quantile regions when applied
to Markov data as will be shown later, the framework we develop is tailored to the Markov
setting. It is based on the description of the law of a Markov chain by its transition
probability kernel and can be considered as a very natural extension of the multivariate case.
Given a statistical depth on the state space, we propose to define the depth of a sample
path as the geometric average of the depth of each state involved in it w.r.t. the
transition probability evaluated at the preceding state. We show that the depth on the
torus of all finite length trajectories thus constructed inherits many of the properties
that the multivariate depth may enjoy, has significant computational advantages compared
to its competitors and can be interpreted in a very intuitive fashion: the greater the
average number of deep transitions, the deeper the path. The accuracy of sampling
versions of the Markov depth promoted is rigorously established in the form of (non-) asymptotic upper confidence bounds in the spirit of generalization guarantees in statistical learning theory.
While such results like ours have been mainly established in the i.i.d. case (see
\textit{e.g.} \cite{DLG96}), the case of observations with a dependency structure,
time-series in particular, has been the subject of intensive study in recent years, see
for instance \cite{AdamsNobel}, \cite{AlqWint2012}, \cite{ChristStein2009},
\cite{CCB20} \cite{Steinwart09}, \cite{KM14} or \cite{DiKolaczyk2004}. These results mainly
rely on extensions of probabilistic bounds for uniform deviations of i.i.d. averages from
their expectation to the case of \textit{weakly dependent training data}, under general
assumptions related to the decay rate of \textit{mixing coefficients}, see \cite{Rio17}.
However, few works, with the exception of \cite{CCB20} for instance, are specific to the
case of Markov chains and exploit their specific dynamics as we do in this paper.

Several promising applications related to the statistical analysis of Markov
paths of variable lengths are also discussed. Motivated by practical problems in various
domains (\textit{e.g.} health monitoring, quality control, security), we pay particular attention
to the task of detecting abnormal trajectories. In the numerical experiments that have been carried out,
the proposed methodology performs significantly better than pre-existing approaches,
paving the way for its use in a wider range of tasks.

The paper is structured as follows. In section \ref{sec:background}, the main notions of
statistical depth are briefly recalled alongside basic elements of Markov chain theory.
In section \ref{sec:markov_depth} we introduce the
concept of \textit{Markovian depth function},
we explain how to construct Markovian depth functions based on statistical depths
defined on the state space and we study its main properties.
Various numerical experiments illustrating the relevance of the concept we propose are
presented in section \ref{sec:numerical_experiments}. Section \ref{sec:perspectives}
collects a few concluding remarks and perspectives for further research, while, due to
space limitations, all technical proofs as well as additional discussions, examples and
applications are postponed to the Appendix.

\section{Background and Preliminaries}\label{sec:background}
\vspace{-5pt}As a first go, we briefly recall some basic notions pertaining to statistical depth
theory and key concepts in the study of Markov chains (refer to \textit{e.g.}
\cite{Meyn2009,markovChain2018} for a more detailed account), which shall be used in the
paper. Throughout the article, we denote by $\mathbb{I}\{B\}$ the indicator function of
any event $B$, by $\delta_a$ the Dirac mass at any point $a$ and by $\Rightarrow$
convergence in distribution. The Euclidean inner product and norm on $\mathbb{R}^d$ are
denoted by $\langle \cdot,\cdot \rangle$ and $\vert\vert \cdot\vert\vert$.

\subsection{Statistical Depth}\label{sec:statistical_depth}
Given the lack of any ``natural order'' on $\mathbb{R}^d$ when $d\geq2$, the notion of
\textit{statistical depth} allows to define a center-outward ordering of points in the
support of a probability distribution $P$ on $\mathbb{R}^d$ and, as a result, to extend
the notions of order and (signed) rank statistics to multivariate data, see
\cite{Mosler13}. By \textit{depth function} relative to $P$ is meant a function that
determines the centrality of any point $x\in \mathbb{R}^d$ with respect to the
probability measure $P$, that is to say any bounded non-negative Borel-measurable
function $D_P:\mathbb{R}^d\rightarrow \mathbb{R}_+$ such that the points $x\in
    \mathbb{R}^d$ near the 'center' of the mass are the deepest (\textit{i.e.} such that
$D_P(x)$ is among the highest values taken by the function).
The depth $D_P$ thus enables us to define a preorder for multivariate points $x \in \mathbb{R}^d$.
Originally introduced in the seminal contribution \cite{Tukey1975}, the
\textit{half-space depth} of $x$ in $\mathbb{R}^d$ relative to $P$ is the minimum of the
measure $P(\mathcal{H})$ taken over all closed half-spaces $\mathcal{H}\subset \mathbb{R}^d$
such that $x\in \mathcal{H}$. Many alternatives have been developed, see \textit{e.g.}
\cite{Liu1990}, \cite{LiuModarres2011}, \cite{mozharovskyi2022anomaly},
\cite{CuevasFraiman2009} or \cite{chernozhukov2017} among others. Refer to
\ref{subsec:depth_supp} in the Appendix for a list of popular statistical
depths for multivariate distributions and a discussion about computational issues. In
\cite{ZuoSerfling00}, an axiomatic nomenclature of statistical depths has been devised,
providing a systematic way of comparing their merits and drawbacks. Precisely, the four
properties below should be ideally satisfied for any distribution on
$\mathbb{R}^d$.

\begin{itemize}
    \item[\PropGeneralInvarianceText]\label{prop:PropGeneralInvariance} {\sc (Affine invariance)} Meaning by $P_X$ the law of any r.v. $X$ valued in $\mathbb{R}^d$, it holds: $D_{P_{AX+b}}(Ax+b)=D_P(x)$ for all $x\in \mathbb{R}^d$, any r.v. $X$ taking its values in $\mathbb{R}^d$, any $d\times d$ nonsingular matrix $A$ with real entries and any $b$ in $\mathbb{R}^d$.
    \item[\PropVanishingBoundaryText]\label{prop:PropVanishingBoundary} {\sc (Vanishing at infinity)} For any probability law $P$ on $\mathbb{R}^d$, the depth function $D_P$ vanishes at infinity,  \textit{i.e.} $D_P(x)\rightarrow 0$  as $\vert\vert x\vert\vert$ tends to infinity.
    \item[\PropMaximalityCenterText]\label{prop:PropMaximalityCenter} {\sc (Maximality at center)} For any probability distribution $P$ on $\mathbb{R}^d$ possessing a symmetry center $x_P$ (in a sense to be specified), the depth function $D_P$ takes its maximum value at it, \textit{i.e.} $D_P(x_P)=\sup_{x\in \mathbb{R}^d}D_P(x)$.
    \item[\PropMonotonicityText]\label{prop:PropMonotonicity} {\sc (Monotonicity relative to the deepest point)} For any distribution $P$ on $\mathbb{R}^d$ with deepest point $x_P$, the depth at any point $x$ in $\mathbb{R}^d$ decreases as one moves away from $x_P$ along any ray passing through it, \textit{i.e.}  $D_P(x)\leq D_P(x_P+\alpha(x-x_P))$ for any $\alpha$ in $[0,1]$.
\end{itemize}

Several studies have examined whether some of the properties listed above are verified by
the various notions of statistical depth proposed in the literature, see \textit{e.g.}
\cite{ZuoSerfling00}. The continuity
properties of the depths, in both \(x\) and \(P\), have also been studied for most
statistical depths, see \textit{e.g.} \cite{DonohoG92,NagyGijbels2016}.
As the distribution $P$ of interest is usually unknown in practice, its analysis must
rely on the observation of $N\geq 1$ independent realizations $X_1,\ldots, X_N$ of $P$. A
statistical version of $D_P(x)$ can be built by replacing $P$ with an empirical
counterpart $\hat{P}_N$ based on the $X_i$'s, \textit{e.g.} the raw empirical
distribution $(1/N)\sum_{i=1}^N\delta_{X_i}$, to get the \textit{empirical depth
    function} $\Depth_{\widehat{P}_N}(x)$. The consistency and asymptotic normality of
empirical depth functions have been analyzed for different notions of statistical depth \cite{DonohoG92},
and concentration results for
empirical half-space depth and contours can be found in \cite{BurrF17}.
The ranks and order statistics derived from a depth function can be used for a variety of
tasks, \textit{e.g.} classification and clustering \cite{jornsten}, anomaly
detection \cite{mozharovskyi2022anomaly} or two-sample tests \cite{ShiYueFu2023} among
others. The case of functional data has received some attention in the literature.
Although most practical approaches consist in projecting first the functional data onto
an appropriate finite dimensional space (\textit{filtering}) and using next a notion of
multivariate depth, certain methods recently documented rely on metrics or exploit the
geometry of the trajectories/curves in the path space,
see \cite{StaermanMozharovskyiClemencon2020}. As we will see in the following, such techniques perform poorly when applied to Markov data, given
the possibly great dispersion of Markov probability laws. In contrast to the methodology we
propose, none of them exploits the underlying Markov dynamics/structure of
paths and is suitable to analyze trajectories of different lengths.

\subsection{Basic Properties of Markov Chains}\label{sec:markov_chains}
We first recall a few definitions concerning the communication structure and the
stochastic stability of Markov chains. Let $\mathbf{X}=( X_{n}) _{n\in\mathbb{N}}$ be a
(time-homogeneous) chain (of order $1$) defined on some probability space
$(\Omega,\; \mathcal{F},\; \mathbb{P})$ with a countably generated state space
$E\subseteq \mathbb{R}^d$ equipped with its Borel $\sigma$-field $\mathcal{B}(E)$,
transition probability kernel $\kernel:(x, A)\in E\times \mathcal{B}(E) \mapsto \Pi_x(A)$
and initial probability distribution $\initMeasure$. For any $A\in \mathcal{B}(E)$ and
$n\in\mathbb{N},$ we thus have that \(X_{0}\sim\nu\) and
\begin{equation}\label{eq:Markov_ppty}
    \mathbb{P}(X_{n+1}\in A\mid X_{0},\ldots, X_{n})=\Pi_{X_{n}}(A)\quad a.s.
\end{equation}
We denote by $\mathbb{P}_{\nu}$ (respectively, by $\mathbb{P}_{x}$ for $x$ in
$E$) the probability measure on the underlying probability space
such that $X_{0}\sim\nu$ (resp. $X_{0}=x$) and by $\mathbb{E}_{\nu}[.]
$ the $\mathbb{P}_{\nu}$-expectation (resp. by $\mathbb{E}_{x}[.]$
the $\mathbb{P}_{x}$-expectation).

The chain $\mathbf{X}$ is said to be \textit{irreducible} if there exists a
$\sigma$-finite measure $\psi$ s.t. for all set $B\in \mathcal{B}(E)$, when $\psi(B)>0$,
the chain visits $B$ with strictly positive probability, no matter what the starting
point is. A Borel set $A$ is \textit{Harris recurrent} for the chain $\mathbf{X}$ if for any $x\in
    A$, $\mathbb{P}_{x}(\sum_{n=1}^{\infty}\mathbb{I}\{X_{n}\in A\}=\infty)=1$. The chain
$\mathbf{X}$ is said \textit{Harris recurrent }if it is $\psi $-irreducible and every
measurable set $A$ s.t. $\psi(A)>0$ is Harris recurrent. When the chain is Harris
recurrent, we have the property that
$\mathbb{P}_{x}(\sum_{n=1}^{\infty}\mathbb{I}\{X_{n}\in A\}=\infty)=1$ for any $x\in E$
and any $A\in \mathcal{B}(E)$ such that $\psi(A)>0.$ A probability measure
$\invariantMeasure$ on $E$ is said invariant for $\mathbf{X}$ when
$\invariantMeasure\Pi=\invariantMeasure$, where $\invariantMeasure\kernel(dy)=\int_{x\in
        E}\mu(dx)\kernel_x(dy) $. An irreducible chain is said \textit{positive recurrent} when
it admits an invariant probability (it is then unique).

\begin{example}\label{ex:modulated_random_walk}{\sc (Modulated random walk)}
    Consider the model defined by $X_{0}=x_0\geq 0$ and $X_{n+1}=\max(0,\; X_{n}+W_{n})$ for
    $n\in\mathbb{N}$ where $(W_{n})$ is a sequence of i.i.d. random variables with distribution \(F\).
    Then, $X_{n}$ is a Markov chain on $E=[0,\; \infty)$ with transition kernel: \(\kernel(x,[0,y]) = F(y-x)\quad x,y\geq 0.\)
    When \(\E [W_1] <0\), the chain is positive recurrent \cite[Proposition
        11.4.1]{Meyn2009}. Such a modulated random walk on the half-line models various
    systems, such as \textit{content-dependent storage processes}, \textit{work-modulated
        single server queues}, see \textit{e.g.} \cite[Section III.6]{Asmussen2010}.
\end{example}

Denote by $\mathbb{T}=\cup_{n\geq 1}E^n$ the torus of all trajectories of finite length
in the state space $E$, equipped with its usual topology $\mathcal{T}=\{O\subset
    \mathbb{T}:\; \forall n\geq 1,\; O\cap E^n\in \mathcal{B}(E^n)\}$ (see \cite[pp.
    131]{Dugundji1978} or Section \ref{subsec:topology_supp} of the Appendix
for further details). Any observed finite length path $(x_0,\ldots,x_n)$ of
$\chain$ is an element of $\SetSeq$. Our goal is to develop tools for assessing the
\textit{centrality}, or the \textit{outlyingness} conversely, of elements of \(\SetSeq\)
w.r.t. the law of a chain \(\chain\). As illustrated in the example
below, classic path depths based on the geometry of the space of trajectories are
inappropriate even in the case where all trajectories have the same length.

\begin{example}{\sc{(Limits of geometric approaches)}}\label{ex:limit_geometric}
    Consider the model described in Example \ref{ex:modulated_random_walk}, where \(W_n=-1.1\times
    Y_n+1\) and the \(Y_n\)'s are i.i.d. exponential r.v.'s with mean $1$. Note that,
    with probability $1$: \(W_n\leq 1\), hence \(X_{n+1}-X_n\leq 1\). Fig.
    \ref{fig:trajectories} shows, in solid red line, an unfeasible trajectory
    \(\textbf{\textnormal{x}}=(0,x_1,\ldots,x_5)\) where \(x_2-x_1=1.1\), sandwiched
    between two feasible trajectories (green/orange dashed lines). We have calculated the
    Lens and Mahalanobis depths (MBD) of this unfeasible trajectory w.r.t. the joint
    probability distribution of \(X_1,\ldots,X_5\), obtaining the values $0.42$ and
    $0.097$ respectively, very close to the depths of the feasible ones that surround it
    in Fig. \ref{fig:lens_depth} and \ref{fig:mahalanobis_depth}..
\end{example}
\vspace{-20pt}
\begin{figure}[ht]
    \centering
    \begin{subfigure}[b]{0.4\linewidth}
        \centering
        \includegraphics[height=3cm]{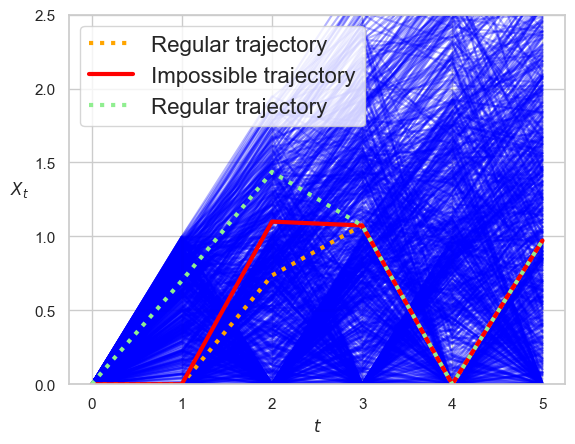}
        \caption{Trajectories}\label{fig:trajectories}
    \end{subfigure}
    \hfill
    \begin{subfigure}[b]{0.29\linewidth}
        \centering
        \includegraphics[height=3cm, width=\linewidth]{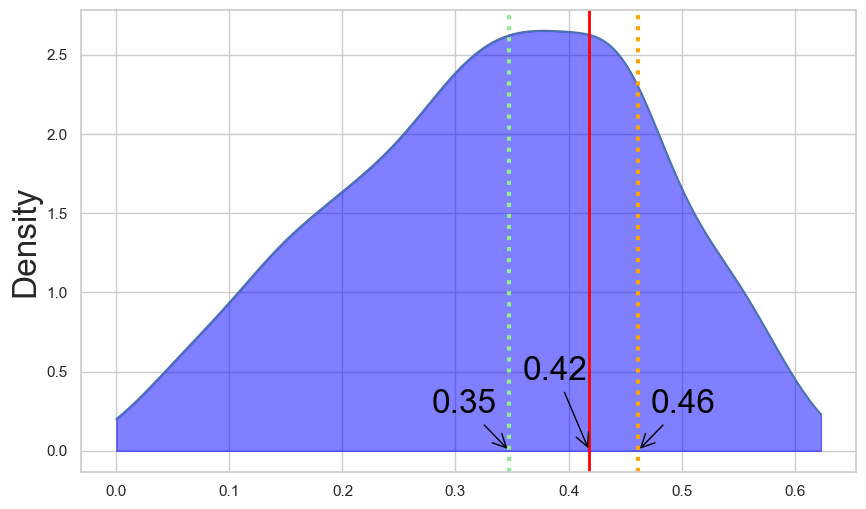}
        \caption{Lens depth density }\label{fig:lens_depth}
    \end{subfigure}
    \hfill
    \begin{subfigure}[b]{0.29\linewidth}
        \centering
        \includegraphics[height=3cm, width=\linewidth]{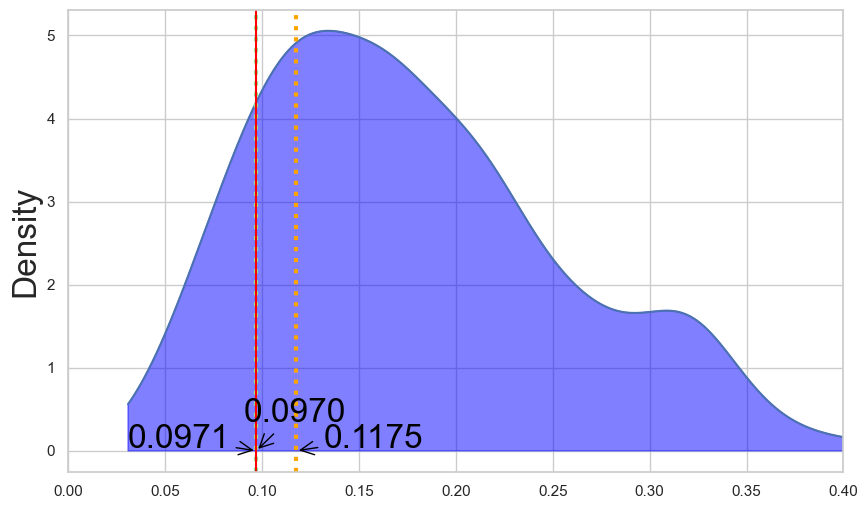}
        \caption{MBD density}\label{fig:mahalanobis_depth}
    \end{subfigure}

    \caption{(a) an unfeasible trajectory (red line) of the modulated random walk
        surrounded by two regular trajectories (green and orange
        dotted lines). (b) and (c), the densities of Lens and Mahalanobis depths w.r.t. the joint distribution of \(X_1,\ldots,X_5\).}\label{fig:metric_anomaly}
\end{figure}

\vspace{-20pt}\section{Depth functions for Markov Chains}\label{sec:markov_depth}
This section is devoted to extending the concept of statistical depth function
to the domain of Markov chains. We propose a very general mechanism for constructing such Markov depth functions and then investigate their main properties. Particular attention is paid to the study of sampling versions.

\subsection{Markovian Depth Functions}\label{sec:markov_depth_definition}
As stated in the introduction, our goal is to extend the notion of depth function to the
space \(\SetSeq\) of finite length trajectories of a Markov chain in order to assess the
centrality or outlyingness of Markov paths. The following is a list of a priori desirable
properties for such functions on $\mathbb{T}$.


\begin{definition}[Markovian depth function]\label{def:markov_depth} A statistical depth function w.r.t.
    the distribution of the Markov chain \(\chain\) is a function \(\D_{\chain}:\SetSeq\to \R_+\)
    that may satisfy some of the following properties:
    \begin{itemize}
        \item[\PropIndependenceInitDistrText]\label{prop:PropIndependenceInitDistr} {\sc (Independence from the initial law)}
              If \(\chain\) and \(\mathbf{X}'\) are two Markov chains with the same transition kernel, then:
              \begin{equation*}
                  \forall \mathbf{x}\in\SetSeq,\;\; \D_{\chain}(\mathbf{x})=\D_{\mathbf{X}'}(\mathbf{x}).
              \end{equation*}
        \item[\PropMarkovInvarianceText]\label{prop:PropMarkovInvariance} {\sc (Affine invariance)}
              For any $d\times d$ non singular matrix $A$, $b\in \mathbb{R}^d$, it holds:
              \begin{equation*}
\forall \mathbf{x}\in\SetSeq,\;\;                  \D_{A\chain+b}(A\mathbf{x}+b)=\D_{\chain}(\mathbf{x}),
              \end{equation*}
              where we set \(A\mathbf{x}+b=(Ax_0+b,\ldots,Ax_n+b)\) when \(\mathbf{x}=(x_0,\ldots,x_n)\).
        \item[\PropMarkovVanishingBoundaryText]\label{prop:PropMarkovVanishingBoundary} {\sc (Vanishing at infinity)} We have $\D_{\chain}(\mathbf{x})\to 0$ as $\mathbf{x}$ ``tends to infinity'' in
              the sense of $\SetSeq$'s topology $\SgE$.
    \end{itemize}

\end{definition}

Property {\PropIndependenceInitDistr} in the above definition guarantees that the depth
only depends on the dynamics, \textit{i.e.} the transition probability kernel $\kernel$,
which can be easily estimated with (non-) asymptotic guarantees based on the observation
of a single Markov path of sufficiently long length $N\geq 1$ (see \textit{e.g.}
\cite{Clem00} and \cite{Tjostheim-2001} and the references therein). Properties
    {\PropMarkovInvariance} and {\PropMarkovVanishingBoundary} are the natural extensions of
the properties {\PropGeneralInvariance} and {\PropVanishingBoundary} of the depth
functions on \(\Set\) to the path space $\mathbb{T}$. Regarding the extension of property
    {\PropMaximalityCenter}, though symmetry centers can be defined in the path space in
certain (seldom) situations, they do not necessarily correspond to realizable paths, as
illustrated in section \ref{sec:ex_ppties} of the Supplementary Material, in which case
the maximality at center property is absolutely not desirable. Notice also that, because
of the absence of any vector space structure on \(\SetSeq\), there is no natural
equivalent to the monotonicity property {\PropMonotonicity} in the Markovian case. In the next subsection we propose a simple way to build Markovian depth
functions on $\mathbb{T}$ based on statistical depth functions on \(\Set\).

\subsection{Markovian Sample Path Depth}\label{sec:markovian_depth}
Consider any notion of statistical depth $D$ on \(\Set\) and let
$\mathbf{x}=(x_0,\ldots,x_n)$ be a finite length realization of the chain $\mathbf{X}$
with initial distribution $\initMeasure$ and kernel \(\kernel\). For any
$i\in\{1,\ldots,n\}$, $x_i$ is a realization of the transition probability
$\Pi_{x_{i-1}}$, a distribution on $\Set$, and its depth can be naturally quantified
through $D_{\Pi_{x_{i-1}}}(x_i)$. We propose to define the depth of
$\mathbf{x}$ by aggregating the depth of all transitions forming the path
$\mathbf{x}$, as the geometric mean of the depths of each state transition.

\begin{definition}[Markovian path depth]\label{def:markovian_depth}
    With the convention that $\sqrt[n]{0}=0$ for $n\geq 1$, the Markov depth function of
    a chain $\mathbf{X}$ with transition $\Pi$ based on the multivariate statistical
    depth $D$ is the real-valued function defined on the torus $\mathbb{T}$ of all
    trajectories of finite length as: $\forall n\geq 1$, $\forall \mathbf{x}=(x_0,\;
        \ldots,\; x_n)\in E^{n+1}$,
    \begin{equation}\label{eq:markovian_depth}
        D_{\Pi}(\mathbf{x})=\sqrt[n]{\prod_{i=1}^{n}D_{\Pi_{x_{i-1}}}(x_i)}.
    \end{equation}
\end{definition}

This definition extends that of statistical depth for multivariate distributions. Indeed,
when $n=1$, we have $D_{\Pi}((x_0,x_1))=D_{\Pi_{x_{0}}}(x_1)$. The rationale behind it is
obvious: \textit{the deeper in average the transitions forming the path, the deeper the
    trajectory}. In particular, if an impossible transition $x_{i-1}\to x_i$ is present in
the trajectory $\mathbf{x}$ (\textit{i.e.} $D_{\Pi_{x_{i-1}}}(x_i)=0$), its Markov depth
equals $0$, \textit{i.e.} $D_{\Pi}(\mathbf{x})=0$. Provided that the depth functions
$D_{\Pi_x}$ can be easily computed (or estimated, see the analysis below), the Markovian
sample path depth permits to evaluate and compare the depth of paths of any (finite)
length.

\begin{example}{\sc (Markovian half-space depth)} If the Markov depth is based on the half-space
    depth introduced in \cite{Tukey1975} (see also section \ref{subsec:depth_supp} in the Appendix),
    it takes the form: $\forall n\geq 1$, $\forall \mathbf{x}=(x_0,\; \ldots,\; x_n)\in E^{n+1}$,
    \begin{equation}
        D_{\Pi}(\mathbf{x})=\sqrt[n]{\prod_{i=1}^n \inf_{u\in \mathbb{S}_{d-1}}\Pi_{x_{i-1}}\left(\mathcal{H}_{u,{x_i}}\right)},
    \end{equation}
    where $\mathcal{H}_{u,x}=\{y\in \mathbb{R}^d:\; \langle u,\; y-x\rangle \geq 0\}$ for
    all $(u,x)\in \mathbb{S}_{d-1}\times E$.
\end{example}

\begin{example}{\sc (Markov IRW-depth)} In order to avoid optimization issues, various alternatives to Tukey's
    depth functions have been proposed, see \textit{e.g.} \cite{CuevasFraiman2009}.
    In \cite{IRW} (see also \cite{10.1214/23-EJS2189}), a new data depth, referred to as the Integrated
    Rank-Weighted (IRW) depth, is defined by using an integral over the sphere
    $\mathbb{S}^{d-1}$ instead of the infimum. When based on the IRW-depth, the Markov depth is
    given by: $\forall n\geq 1$, $\forall \mathbf{x}=(x_0,\; \ldots,\; x_n)\in E^{n+1}$,
    \begin{equation}\label{eq:IRW}
        D_{\Pi}(\mathbf{x})=\sqrt[n]{\prod_{i=1}^n \int_{u\in \mathbb{S}_{d-1}}\Pi_{x_{i-1}}\left(\mathcal{H}_{u,{x_i}}\right)\omega_{d-1}(du)},
    \end{equation}
    where $\omega_{d-1}$ is the spherical probability measure on $\mathbb{S}_{d-1}$. The
    main advantage of the IRW depth is of computational nature, the integrals w.r.t.
    $\omega_{d-1}$ involved in \eqref{eq:IRW} can be approximated by Monte Carlo
    procedures: $Z/\vert\vert Z\vert\vert \sim \omega_{d-1}$ as soon as
    $Z\sim\mathcal{N}(0,\; \mathcal{I}_d)$ is a standard centered Gaussian.
\end{example}

In addition to its interpretability, the concept introduced has a number of advantages,
as it inherits many properties from the notion of multivariate statistical depth it
relies on (see Theorems \ref{thm:ZS_properties}, \ref{th:continuity_in_x} and
\ref{th:non_asymptotic_bound}). As discussed in the following, the choice of $D$ must be
weighed against the verification of the desirable properties and the availability of
efficient algorithms for computation in the case of empirical distributions
(\textit{i.e.} for computing $D_{\hat{P}}$, where $\hat{P}$ is a statistical version of
the distribution $P$ considered).\smallskip

\noindent {\bf Main Properties}.
Before investigating the theoretical properties of the Markovian path
depth and inference issues, we state the following result, revealing that the Markov
depth of a positive recurrent chain $\mathbf{X}$, evaluated at a random path of length
$n+1$, is \textit{stochastically stable} as $n\to \infty$, meaning that its distribution
asymptotically converges to a Gaussian limit as the path length $n$ increases.

\begin{proposition}\label{prop:limit}{\sc (Limit distribution)}
  Assume that $\mathbf{X}$ is positive recurrent with stationary distribution
  $\invariantMeasure$ and that $\log(D_{\Pi_x}(y))$ is integrable w.r.t.
  $\invariantMeasure d(x)\Pi_x(dy)$ in $E^2$. Then, for any initial distribution $\nu$,
  we have that $D_{\Pi}(X_0,\; X_1,\; \ldots,\; X_n)$ converges almost surely to $D_{\infty}(\Pi)$
  as $n\rightarrow \infty$, where
  \begin{equation}
    D_{\infty}(\Pi)= \exp\left(\int_{(x,y)\in E^2} \log\left(D_{\Pi_x}(y)  \right)\Pi_{x}(dy)\invariantMeasure(dx) \right).
  \end{equation}
  If in addition $\log(D_{\Pi_x}(y))$ is square integrable w.r.t.
  $\invariantMeasure d(x)\Pi_x(dy)$, then we have that, as $n$ goes to $\infty$,
  \begin{equation*}
    \sqrt{n}\Bigl(D_{\Pi}(X_0,X_1,\ldots,X_n)- D_{\infty}(\Pi)\Bigr)
  \end{equation*}
  converges in distribution to a centered normal random variable. The exact form of the asymptotic variance
  is given by \eqref{eq:asymptotic_vaiance} in the Supplementary Material.
\end{proposition}

Our next result shows that the Markovian sample path depth \(\DepthSeq\) inherits most of
its statistical properties from the multivariate depth $D$ it is based on, and for most
choices of \(\Depth\), it satisfies the properties given in Definition
\ref{def:markov_depth}.

\begin{theorem}\label{thm:ZS_properties} Let \(\Depth\) be a statistical depth on \(\Set\) and
  \(\DepthSeq\) be the Markovian sample path depth based on the latter. The following assertions hold true.
  \begin{itemize}
    \item[(i)] The Markovian depth \(\DepthSeq\) satisfies {\PropIndependenceInitDistr}.
    \item[(ii)] If \(\Depth\) satisfies {\PropGeneralInvariance}, then \(\DepthSeq\) satisfies {\PropMarkovInvariance}.
    \item[(iii)] Assume that $D$ fulfills the following stronger
          version of property {\PropVanishingBoundary}: $\forall M>0$,
          $\sup_{||x||\leq M}D_{\Pi_x}(y)\to 0$ as $\vert\vert y\vert\vert \to \infty$.
          Then \(\DepthSeq\) satisfies {\PropMarkovVanishingBoundary}.
  \end{itemize}
\end{theorem}

In addition to the above properties, \(\DepthSeq\) also satisfies the following property,
which can be viewed as a variation of {\PropMaximalityCenter}.

\begin{proposition}\label{thm:MaximalityCenter}{\sc (Maximality at paths of centers)} Let $\Pi$ be a transition probability on $\mathbb{R}^d$. Assume that $D$
  fulfills property {\PropMaximalityCenter} for a given notion of symmetry, that
  the distribution $\Pi_x(dy)$ on $\Set$ is 'symmetric' w.r.t. a center
  $\theta(x)\in \Set$ for any $x\in \Set$ and that $x\in
    \Set\mapsto D_{\kernel_{x}}(\theta(x))$ is constant. Then, for any initial
  value $x_0\in \mathbb{R}^d$ and any $n\geq 1$, the trajectory $(x_0,\;
    \theta^{(1)}(x_0),\; \ldots,\; \theta^{(n-1)}(x_0))$ is of maximal Markov depth
  $D_{\Pi}$, where $\theta^{(1)}(x_0)=\theta(x_0)$ and $\theta^{(i+1)}(x_0)=(\theta\circ \theta^{(i)})(x_0)$ for $i\geq 1$.
\end{proposition}

We point out that the paths considered in Proposition \ref{thm:MaximalityCenter} above
are not symmetry centers for $\mathbf{X}$'s law in general.

\medskip

\noindent {\bf Continuity (in \(\x\), in $\Pi$).} The following classic technical conditions are involved in the subsequent study of the continuity properties of the Markov depth.
\begin{itemize}
  \item[\PropContinuityDepthText]\label{prop:PropContinuityDepth} {\sc (Continuity in \(x\))} For any law $P$ on $\mathbb{R}^d$, the mapping $x\in \mathbb{R}^d\mapsto D_P(x)$ is continuous.
  \item[\PropContinuityInProbUniformText]\label{prop:PropContinuityInProbUniform} {\sc (Uniform weak continuity in $P$)} For any probability distributions $P$ and $P_n$ on $\mathbb{R}^d$ s.t. $P_n\Rightarrow P$ as $n\to \infty$, we have $              \sup_{x\in\mathbb{R}^d}|\Depth_{P_n}(x)-\Depth_P(x)|\to 0$ as $n\to \infty$.
  \item[\PropContinuityInProbText]\label{prop:PropContinuityInProb} {\sc (Weak continuity in \(P\))}
        For any probability distributions $P$ and $P_n$ on $\mathbb{R}^d$ s.t. $P_n\Rightarrow P$ as $n\to \infty$, we have: $\forall x\in\mathbb{R}^d$, $|\Depth_{P_n}(x)-\Depth_P(x)|\to 0$ as $n\to \infty$.
  \item[\PropWeakFellerText]\label{prop:PropWeakFeller} {\sc (Weak Feller condition)} Denoting by $\mathcal{M}_1(E)$ the set of probability measures on $E$ equipped with the weak convergence topology, the function $x\in E \mapsto \Pi_x$ mapping $E$ to $\mathcal{M}_1(E)$ is continuous.
\end{itemize}

Properties {\PropContinuityDepth} and {\PropContinuityInProbUniform} are satisfied by
many notions of statistical depth (see section \ref{subsec:depth_supp}). The result below
reveals that the Markov depth inherits its continuity properties from those of the
multivariate depth $D$ involved in its definition.

\begin{theorem}\label{th:continuity_in_x}
  The following assertions hold true.
  \begin{itemize}
    \item[(i)]  {\sc (Continuity in $\mathbf{x}$)} Suppose that properties {\PropContinuityDepth}, {\PropContinuityInProbUniform} and {\PropWeakFeller} are
          fulfilled. Then, the mapping $\mathbf{x}\in \mathbb{T}\mapsto D_{\Pi}
            (\mathbf{x})$ is continuous.

    \item[(ii)] {\sc (Continuity in $\Pi$)} Suppose that {\PropContinuityInProb} holds. Let $\Pi$ and $\Pi^{(n)}$ be transition probabilities on $E$ s.t. $\forall x\in E$, $\Pi^{(n)}_x\Rightarrow \Pi_x$ as $n\to \infty$. Then: $\forall \mathbf{x}\in \mathbb{T}$, $D_{\Pi^{(n)}}(\mathbf{x})\to D_{\Pi}(\mathbf{x})$.
  \end{itemize}
\end{theorem}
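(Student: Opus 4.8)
The plan is to prove each of the two assertions by reducing the statement about the Markov depth $D_{\Pi}$ to the corresponding continuity property of the underlying multivariate depth $D$, using the explicit formula \eqref{eq:markovian_depth} together with the fact that $\mathbb{T}$ is a disjoint union of the spaces $E^{n+1}$, so that convergence of a sequence $\mathbf{x}^{(k)}\to\mathbf{x}$ in the topology $\mathcal{T}$ means that eventually all the $\mathbf{x}^{(k)}$ lie in the same $E^{n+1}$ as $\mathbf{x}$ and converge coordinatewise there (this is exactly the simple characterization referred to in the excerpt and in \ref{subsec:topology_supp}). Thus fixing $n$, it suffices to show that $(x_0,\ldots,x_n)\mapsto \prod_{i=1}^n D_{\Pi_{x_{i-1}}}(x_i)$ is continuous on $E^{n+1}$ (for (i)) and that $D_{\Pi^{(m)}}\to D_{\Pi}$ pointwise on each $E^{n+1}$ (for (ii)); the $n$-th root is a continuous function of its argument, so it can be peeled off at the end.

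For assertion (i), I would first establish the key single-transition lemma: under {\PropContinuityDepth}, {\PropContinuityInProbUniform} and {\PropWeakFeller}, the map $(x,y)\in E^2 \mapsto D_{\Pi_x}(y)$ is jointly continuous. To see this, take $(x_k,y_k)\to(x,y)$; by {\PropWeakFeller}, $\Pi_{x_k}\Rightarrow \Pi_x$, so by {\PropContinuityInProbUniform}, $\sup_{z}|D_{\Pi_{x_k}}(z)-D_{\Pi_x}(z)|\to 0$; hence $|D_{\Pi_{x_k}}(y_k)-D_{\Pi_x}(y_k)|\to 0$, and $|D_{\Pi_x}(y_k)-D_{\Pi_x}(y)|\to 0$ by {\PropContinuityDepth}; combine by the triangle inequality. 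Once this lemma is in hand, on a fixed $E^{n+1}$ the product $\prod_{i=1}^n D_{\Pi_{x_{i-1}}}(x_i)$ is a finite product of jointly continuous functions of the coordinates, hence continuous, and applying $t\mapsto t^{1/n}$ (continuous on $\mathbb{R}_+$, with the convention $0^{1/n}=0$ matching Definition \ref{def:markovian_depth}) gives continuity of $D_{\Pi}$ on $E^{n+1}$; since this holds for every $n$ and $\mathbb{T}$ carries the disjoint-union topology, $D_{\Pi}$ is continuous on $\mathbb{T}$.

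For assertion (ii), the argument is similar but easier since the path $\mathbf{x}=(x_0,\ldots,x_n)$ is now fixed and only the kernel varies. For each $i\in\{1,\ldots,n\}$ we have $\Pi^{(m)}_{x_{i-1}}\Rightarrow \Pi_{x_{i-1}}$ by hypothesis, so {\PropContinuityInProb} applied at the fixed point $x_i$ yields $D_{\Pi^{(m)}_{x_{i-1}}}(x_i)\to D_{\Pi_{x_{i-1}}}(x_i)$ as $m\to\infty$. A finite product of convergent sequences converges to the product of the limits, and then continuity of the $n$-th root (again with $0^{1/n}=0$) gives $D_{\Pi^{(m)}}(\mathbf{x})\to D_{\Pi}(\mathbf{x})$.

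The only genuine subtlety, and the step I expect to need the most care, is the behavior at paths containing a zero transition, i.e. where some $D_{\Pi_{x_{i-1}}}(x_i)=0$: here one must check that the convention $\sqrt[n]{0}=0$ is consistent with taking limits, which it is precisely because $t\mapsto t^{1/n}$ extended by $0$ at $0$ is continuous on $[0,\infty)$ and the finite product of nonnegative continuous (resp. convergent) factors is nonnegative continuous (resp. convergent) — so no indeterminate form arises, unlike with the logarithmic representation, which is only valid away from the zero set. I would also state explicitly at the outset the disjoint-union topology fact (convergence in $\mathcal{T}$ $\iff$ eventual membership in a common $E^{n+1}$ plus coordinatewise convergence) so that the reduction to a fixed $n$ is rigorous; everything else is routine.
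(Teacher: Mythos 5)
Your proposal is correct and follows essentially the same route as the paper: for (i) the same triangle-inequality decomposition of $|\Depth_{\kernel_{x_{i-1,n}}}(x_{i,n}) - \Depth_{\kernel_{x_{i-1}}}(x_{i})|$ into a term controlled by \PropWeakFeller{} plus \PropContinuityInProbUniform{} and a term controlled by \PropContinuityDepth{}, followed by continuity of the product and the $n$-th root on each $E^{n+1}$; for (ii) the same per-factor application of \PropContinuityInProb. If anything, your treatment is slightly more careful than the paper's, which glosses over the zero-depth case and bounds the difference of geometric means in (ii) by the maximum of the factor differences (an inequality that is delicate near zero), whereas you simply pass to the limit in the finite product.
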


\noindent{\bf Sampling Versions and Estimation.}
The Markovian depth $D_{\Pi}$ is unknown in general, just like the transition probability
$\Pi$. In practice, a \textit{plug-in} strategy based on an empirical counterpart
$\hat{\Pi}$ of $\Pi$ and must be implemented, using $D_{\hat{\Pi}}$ as an estimate of
$D_{\Pi}$. Refer to \ref{subsec:kernel_supp} in the Supplementary Material for a
description of inference techniques dedicated to the statistical estimation of
\(\kernel\) with (non-asymptotic) guarantees. The following condition, which can be
viewed as strong version of {\PropContinuityInProbUniform}, permits to link $\kernel$'s
estimation error with that of $D_{\kernel}$.
\begin{itemize}
  \item[\PropLipschitzText]\label{prop:PropLipschitz}{\sc (Lipschitz condition)} Let $\mathcal{A}$ be a collection of Borel subsets of $\mathbb{R}^d$. There
        exists a finite constant $C_d$ such that, for any probability distributions $P$
        and $Q$ on $\mathbb{R}^d$, we have: $ \sup_{x\in \mathbb{R}^d}|\Depth_P(x) -
          \Depth_Q(x)| \leq C_d ||P-Q||_{\mathcal{A}}$, where
        $||P-Q||_{\mathcal{A}}:=\sup_{A\in \mathcal{A}}|P(A)-Q(A)|$.
\end{itemize}

Property {\PropLipschitz} is fulfilled by various multivariate depths for specific
classes $\mathcal{A}$ (see Section \ref{subsec:depth_supp} of the appendix). As shown by the theorem below,
non-asymptotic guarantees for sampling versions of the Markov depth can be established
under this condition.

\begin{theorem}\label{th:non_asymptotic_bound}
  Let $n\geq 1$ and $\mathbf{x}\in E^{n+1}$. Consider two
  transition probabilities $\Pi$ and $\hat{\Pi}$ on $E$. Suppose that \(\Depth\) fulfills {\PropLipschitz} and there exists \(\epsilon>0\) s.t. $\min
    \{D_{\hat{\kernel}_{i}}(x_{i+1}),D_{\kernel_{i}}(x_{i+1})\}>\epsilon$ for $i<n$. We have
  \begin{equation}\label{eq:bound}
    |\DepthSeq(\x) - \DepthSeq[\hat{\kernel}](\x)| \leq \frac{7}{4}C_d\frac{\DepthSeq(\x)}{\epsilon} \max_{i=0,\; \ldots,\; n-1}||\kernel_{x_i} - \hat{\kernel}_{x_i}||_{\mathcal{A}}.
  \end{equation}
\end{theorem}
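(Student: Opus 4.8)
The plan is to pass to the logarithmic representation of the Markov depth in Definition~\ref{def:markovian_depth}, which is legitimate exactly because the hypothesis keeps every transition depth bounded below by $\epsilon>0$. Write $a_i=D_{\Pi_{x_{i-1}}}(x_i)$ and $\hat a_i=D_{\hat\Pi_{x_{i-1}}}(x_i)$ for $i=1,\dots,n$, so that $D_{\Pi}(\mathbf{x})=e^{L}$ and $D_{\hat\Pi}(\mathbf{x})=e^{\hat L}$ with $L=\tfrac1n\sum_{i=1}^{n}\log a_i$ and $\hat L=\tfrac1n\sum_{i=1}^{n}\log\hat a_i$; by assumption $a_i>\epsilon$ and $\hat a_i>\epsilon$ for every $i$. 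Set $M=\max_{0\le i\le n-1}\|\Pi_{x_i}-\hat\Pi_{x_i}\|_{\mathcal{A}}$.

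First I would control each factor separately. Applying {\PropLipschitz} to $P=\Pi_{x_{i-1}}$ and $Q=\hat\Pi_{x_{i-1}}$ and evaluating the resulting uniform bound at the point $x_i$ gives $|a_i-\hat a_i|\le C_d\|\Pi_{x_{i-1}}-\hat\Pi_{x_{i-1}}\|_{\mathcal{A}}\le C_d M$. Then, since $t\mapsto\log t$ has derivative $1/t$ and both $a_i,\hat a_i$ lie in $[\epsilon,\infty)$, the mean value theorem yields $|\log a_i-\log\hat a_i|\le|a_i-\hat a_i|/\min(a_i,\hat a_i)\le C_d M/\epsilon$. Averaging over $i$ gives $|L-\hat L|\le C_d M/\epsilon=:t$.

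Next I would recombine. Writing $|D_{\Pi}(\mathbf{x})-D_{\hat\Pi}(\mathbf{x})|=e^{L}\,|1-e^{\hat L-L}|$ and using $|e^{u}-1|\le e^{|u|}-1$ together with $|\hat L-L|\le t$ leads to $|D_{\Pi}(\mathbf{x})-D_{\hat\Pi}(\mathbf{x})|\le D_{\Pi}(\mathbf{x})\,(e^{t}-1)$. The constant $\tfrac74$ is then produced by the elementary inequality $e^{t}-1\le t/(1-t/2)\le\tfrac74 t$, valid once $t=C_dM/\epsilon\le 6/7$ (the regime of interest, since the estimation error $M$ is small), which itself follows from $k!\ge 2^{k-1}$ in the power series of $e^{t}-1$. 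Substituting back $t=C_dM/\epsilon$ and the definition of $M$ gives exactly \eqref{eq:bound}.

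The argument is short, and the only genuinely delicate point is the move to logarithms: the whole bound rests on every $a_i$ and $\hat a_i$ staying strictly above $\epsilon$, which is precisely why this is imposed — a geometric mean is arbitrarily sensitive to perturbations of a single factor that approaches $0$. As a side remark, one could instead differentiate the geometric mean directly along the segment joining $(a_i)_i$ to $(\hat a_i)_i$; this reproduces the same linear-in-$M$ shape but with the cruder factor $(1+t)t$, which would force $t\le 3/4$ rather than $6/7$, so routing through $\log$ and $\exp$ is what yields the clean constant in \eqref{eq:bound}.
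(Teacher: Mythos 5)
Your proof follows essentially the same route as the paper's: pass to the logarithm of the geometric mean, bound each per-transition log-discrepancy by $C_d M/(\text{something}\ge\epsilon)$ with $M=\max_i\|\kernel_{x_i}-\hat{\kernel}_{x_i}\|_{\mathcal{A}}$ using {\PropLipschitz}, and convert back through an elementary inequality of the form $|1-e^u|\le\tfrac74|u|$. The only real variation is that the paper estimates $|\ln(1+h_i/D_{\Pi_{x_i}}(x_{i+1}))|$ by $|h_i|/\sqrt{D_{\Pi_{x_i}}(x_{i+1})D_{\hat\Pi_{x_i}}(x_{i+1})}$ via $|\ln(1+s)|\le|s|/\sqrt{1+s}$, which is why it can state a sharper intermediate result with a harmonic mean in place of $\epsilon$, whereas your mean value theorem gives the slightly weaker $|h_i|/\min\{D_{\Pi_{x_{i-1}}}(x_i),D_{\hat\Pi_{x_{i-1}}}(x_i)\}$; both are $\le|h_i|/\epsilon$ under the hypothesis, so this is immaterial for the bound as stated.

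The one substantive issue is that your chain $|1-e^{\hat L-L}|\le e^{t}-1\le t/(1-t/2)\le\tfrac74 t$ is only certified for $t=C_dM/\epsilon\le 6/7$, while the theorem carries no smallness assumption on $M$; calling this ``the regime of interest'' leaves the complementary case unproved. The paper faces exactly the same case split: its stronger appendix version assumes $C_d\|\kernel_{x_i}-\hat{\kernel}_{x_i}\|_{\mathcal{A}}<\min\{D_{\Pi_{x_i}}(x_{i+1}),D_{\hat\Pi_{x_i}}(x_{i+1})\}$ for every $i$, and the remaining case is dispatched separately. Your gap is easy to close: since $|D_{\hat\Pi_{x_{i-1}}}(x_i)-D_{\Pi_{x_{i-1}}}(x_i)|\le C_dM$ while both depths exceed $\epsilon$, each ratio $D_{\hat\Pi_{x_{i-1}}}(x_i)/D_{\Pi_{x_{i-1}}}(x_i)$ and its reciprocal is at most $1+t$; taking geometric means gives $D_{\hat\Pi}(\mathbf{x})\le(1+t)D_{\Pi}(\mathbf{x})$ and $D_{\Pi}(\mathbf{x})\le(1+t)D_{\hat\Pi}(\mathbf{x})$, whence $|D_{\Pi}(\mathbf{x})-D_{\hat\Pi}(\mathbf{x})|\le t\,D_{\Pi}(\mathbf{x})$ unconditionally. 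One sentence of this kind completes your argument (and, as a side remark, this ratio bound alone already yields the theorem with the constant $1$ in place of $\tfrac74$).
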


The above result deserves comments. It proves that error bounds (in expectation, in
probability) for point-wise deviations between $D_{\Pi}(\mathbf{x})$ and
$D_{\hat{\Pi}}(\mathbf{x})$ can be straightforwardly deduced from bounds for the maximal
deviations over the class $\mathcal{A}$ between the transition probability $\Pi_x$ and
its estimator $\hat{\Pi}_x$ at the successive points $x$ forming the path $\mathbf{x}$
considered. In addition, as $\min_{1\leq i<n}D_{\Pi_{x_{i-1}}}(x_i)$ gets smaller, the factor
$D_{\Pi}(\mathbf{x})/\epsilon$ on the right hand side of \eqref{eq:bound} increases to
$\infty$, showing that the smaller the Markov depth value to be estimated with a given
error bound, the higher the accuracy of the transition probability estimator required.\smallskip

\noindent{\bf Computational issues.}
Various efficient algorithmic procedures have been designed to compute the values taken
by a depth function in the multivariate case, based on a discrete, empirical distribution \cite{MM22}.
The approach below shows how to use the latter to estimate the values taken by
a Markovian sample path depth.
\vspace{-25pt}
\begin{algorithm}
  \DontPrintSemicolon
  \KwIn{Path $\x=(x_0,\ldots,x_n)$ with $n\geq 1$ in $\mathbb{T}$, transition probability $\hat{\Pi}$ and precision control integer \(M\geq 1\).}

  \For{$i:= 0$ \KwTo $n-1$}{

  Generate $M$ independent samples $x_{1,i},\ldots,x_{M,i}$ from $\hat{\kernel}_{x_i}$\;
  Compute an estimator $\widehat{D}_i$ of $\Depth_{\hat{\kernel}_{x_i}}(x_{i+1})$ based
  on \(y_{1,i},\ldots,y_{M,i}\)\;

  }

  \KwOut{$\Depth_{\hat{\kernel}}(\x)=\sqrt[n]{\prod_{i=1}^{n}{\widehat{D}_i}}$.}

  \caption{Estimation of $D_{\hat{\Pi}}(\x)$}
  \label{alg:estimating_markov_depth}
\end{algorithm}

\vspace{-15pt}Attention should be paid to the fact that the complexity of estimating the value that
$D_{\hat{\Pi}}$ takes at a path of length $n+1$ through Algorithm
\ref{alg:estimating_markov_depth} is of order $O(ng(M,d))$, where $O(g(M,d))$ is the
numerical complexity of computing the value taken by the empirical depth function
$\Depth_{\hat{P}}$ based on \(M\) independent samples of \(P\). For instance, we have
$g(M,d)=M^{d-1}\log M$ in the case of the half-space depth, see \textit{e.g.}
\cite{DyckerhoffM16}. In comparison, the complexity of calculating the value
taken by the multivariate depth at $(x_0,\ldots,x_n)$, viewed as a point of
$\mathbb{R}^{(n+1)d}$, based on $M$ paths of length $n+1$ (viewed as $M$ sample points in
$\mathbb{R}^{(n+1)d}$) is of order $O(g(M,(n+1)d))$ (namely, $M^{(n+1)d-1}\log M$ in the
case of the half-space depth). As a result, the Markov depth offers considerable
computational advantages over the direct use of multivariate depth when considering
``long'' trajectories.



\section{Application to Anomaly Detection}\label{sec:numerical_experiments}
The proposed notion of Markov depth can be used for various tasks related to
the statistical analysis of Markov paths of different lengths. For reasons of space and
given the importance of this task in practice, the focus is on (unsupervised)
anomaly detection in this section. Applications of the Markov depth to the clustering of Markov paths of
different lengths and to the problem of testing whether two collections of Markov
trajectories are drawn from the same (unknown) transition probability are described in
the appendix, together with dedicated numerical experiments.
\medskip

\noindent \textbf{Unsupervised anomaly detection.} Markov chains are used in a wide variety of fields, including systems engineering to
model storage systems or tele-traffic data as well as time-series analysis,
and (unsupervised) anomaly detection plays a crucial role in the monitoring/management of
such complex systems/phenomena. Precisely, the problem considered is as follows. The
objective is to identify paths $\mathbf{x}$ that are suspicious due to their significant
difference with the majority of observed sample paths. Here, an observation
$\mathbf{x}=(x_0,\ldots,x_n)$ with fixed length $n\geq 1$ and initial value $x_0\in
    \mathbb{R}^d$ is \textit{abnormal} when it is not the realization of the Markov chain
with transition probability $\Pi$ generating the \textit{normal} (``not abnormal'', with no
reference to the Gaussian law here) trajectories, \textit{i.e.} when it is not a
realization of the probability distribution
$\delta_{x_0}(dx_{0})\Pi(x_0,dy_1)\Pi(y_1,dy_2)\ldots \Pi(y_{n-1},dy_n)$, $\Pi$ being
unknown in practice of course.
Based on a set of (unlabeled, but supposedly normal in vast majority) training paths, the
goal is to build an anomaly scoring rule $s:\mathbb{T}\to \mathbb{R}$ permitting to
assign a level of abnormality $s(\mathbf{x})$ to any future path $\mathbf{x}$ in
$\mathbb{R}$: the lower the score $s(\mathbf{x})$, the more suspicious the path
$\mathbf{x}$ observed ideally. Although the learning stage involves unlabeled datasets,
the availability of labels is required to assess predictive performance (possibly
depending on the type of anomaly considered). The gold standard in this respect is the
$\roc$ curve, \textit{i.e.} the parameterized curve describing the rate of normal paths
with a score below $t$ (false positive rate) compared to that of abnormal paths with a
score below $t$ (true positive rate), as the $t$ threshold varies, or its popular scalar
summary, the $\auc$ criterion (\textit{i.e.} the Area Under the $\roc$ Curve).

Below, we describe the two experimental contexts for which we then present the numerical
results. The latter reveal that the performance of the simple method of using an
empirical version of a Markov depth as the anomaly scoring function is quite similar to
that of conventional techniques (which can only be used in contrast when the
training/test paths are all of the same length) when considering well-identified
isolated/displacement/shape anomalies (see \cite{hubert2015multivariate}) and
significantly outperforms them when the anomaly is of a dynamic nature (\textit{i.e.}
corresponding to a more or less long-lasting change in transition dynamics), in line with
its original motivation. This is confirmed by the extra experiments documented in
the Appendix.

\noindent\textbf{Nonlinear time-series.} Consider the ARCH(1) Markov chain, defined
by: \(X_{n+1}=m(X_{n})+\sigma(X_n)\epsilon_n\) where \(m:\R\mapsto\R\) and
\(\sigma:\R\mapsto\R^*_{+}\) are unknown measurable functions and \(\epsilon_n\) is a
sequence of i.i.d. r.v.'s with mean \(0\) and variance \(1\) independent of
\(X_0\).
Here, we have chosen \(X_0\equiv 0.5\), \(m(x)={1/(1+\exp(-x))}\) and
\(\sigma(x)=\psi(x+1.2)+1.5\psi(x-1.2)\) where \(\psi(x)\) is $\mathcal{N}(0,1)$'s
density function and $\epsilon_n \sim \mathcal{N}(0,1)$. This model, introduced
in \cite{Engle1982}, is widely used in Econometrics, to model the return of
financial assets \cite{BollerslevChouKroner1992,HARDLE1997223,FanYao1998}.\smallskip

\noindent {\bf Queuing system.} Consider now a GI/G/1 queuing system with
interarrival times \(T_n\) and service times \(V_n\) where \(\{T_n\}_{n\geq 0}\) and
\(\{V_n\}_{n\geq 0}\) are independent i.i.d. sequences with distributions \(V\) and \(T\)
respectively. Denote by \(X_n\) the waiting time of
the \(n\)-th customer and assume that \(X_0=0\), then, \(X_{n+1}=\max(0,X_{n}+W_{n})\) where \(W_n=V_n-T_n\), which
corresponds to
Example \ref{ex:modulated_random_walk}. Here, \(V\) and \(T\) are
exponential r.v.'s with means \(0.45\) and, \(0.5\) respectively.
\smallskip

\noindent\textbf{Generation of anomalous paths.} We examine $4$ types of anomalies (isolated/shock, two dynamic
and shift), generated by modifying the Markov model over a
path segment. For the ARCH(1) model, the isolated anomaly is simulated by setting
\(m(x)=5x\) and \(\sigma(x)=|x|^{1/2}\) for two steps. The first dynamic anomaly alters
\(m(x)=1/{(2+\exp(-x))}\) for 60\% of the path, while the second takes
\(\sigma(x)=0.5\sqrt{x^2+1}\) over 60\% of the trajectory. The shift anomaly uses
\(m(x)\equiv 2\) for 60\% of the path length.

For the queuing system, the shock anomaly is generated by using \(V\) as an exponential
distribution with a mean of $2.25$ over 10\% of the trajectory. The first dynamic anomaly
changes the interarrival time distribution \(T\) to an exponential with a mean of $0.1$
over 20\% of the path, simulating a period of increased customer arrivals. The second
dynamic anomaly modifies the service time distribution \(V\) to \(0.55 \mathcal{U}\),
where \(\mathcal{U}\) is uniform on $(0,2)$, over 30\% of the path, representing a period
of slightly slower service times. The shift anomaly is generated by considering
deterministic interarrival times \(T_n=2^{-n}\) over 25 of steps.

\vspace{-26pt}
\begin{table}[ht]
    \caption{$\auc$ for anomalies in the ARCH(1) and queuing models}\label{tab:combined_auc}
    \vspace{5pt}
    \centering
    \begin{tabular}{lcc}
        \toprule
        Anomaly type       & ARCH(1) & Queuing \\
        \midrule
        Shock              & 0.97    & 0.95    \\
        Dynamic anomaly I  & 0.71    & 0.90    \\
        Dynamic anomaly II & 0.87    & 0.73    \\
        Shift              & 1.00    & 0.93    \\
        \bottomrule
    \end{tabular}
\end{table}

\vspace{-10pt}For each model we simulated one long training path (\(n=1000\)) and $4$ contaminated data
sets, each one containing \(200\) paths of random length (between \(50\) and \(200\))
where 50\% of those contain a specific type of anomaly. Based on the long path, a kernel
estimator \(\hat{\kernel}\) (see \ref{subsec:kernel_supp}) is computed and used to infer,
for each trajectory, the Markov depth \(\Depth_{\kernel}\) based on the half-space depth.

\vspace{-10pt}\begin{figure}[ht]
    \centering
    \begin{subfigure}[b]{0.24\linewidth}
        \centering
        \includegraphics[width=\linewidth]{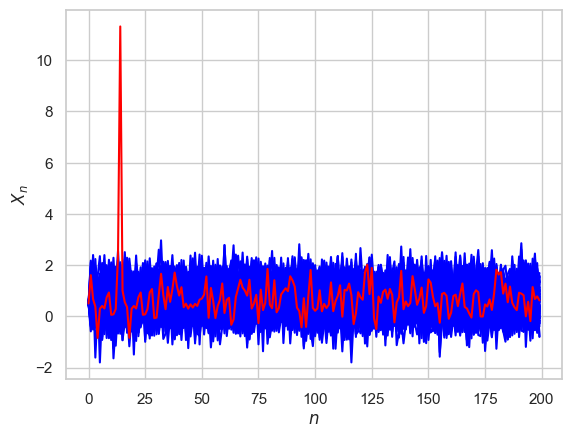}
        \caption{Shock}\label{fig:arch_shock}
    \end{subfigure}
    \hfill
    \begin{subfigure}[b]{0.24\linewidth}
        \centering
        \includegraphics[width=\linewidth]{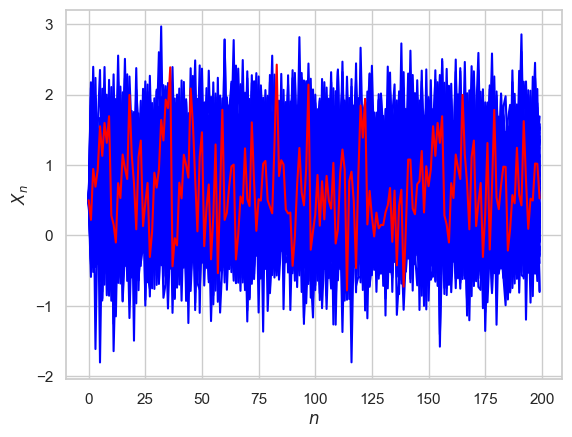}
        \caption{Dyn. an. II}\label{fig:arch_dyn_an_2}
    \end{subfigure}
    \hfill
    \begin{subfigure}[b]{0.24\linewidth}
        \centering
        \includegraphics[width=\linewidth]{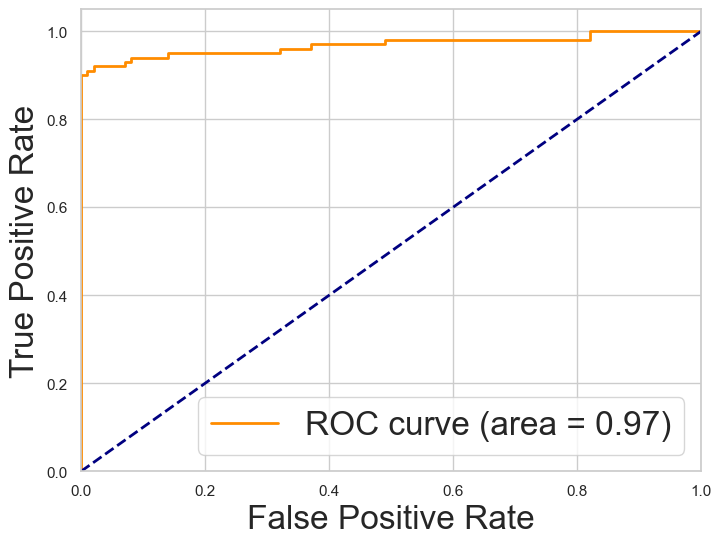}
        \caption{\(\roc\) (Shock)}
    \end{subfigure}
    \hfill
    \begin{subfigure}[b]{0.24\linewidth}
        \centering
        \includegraphics[width=\linewidth]{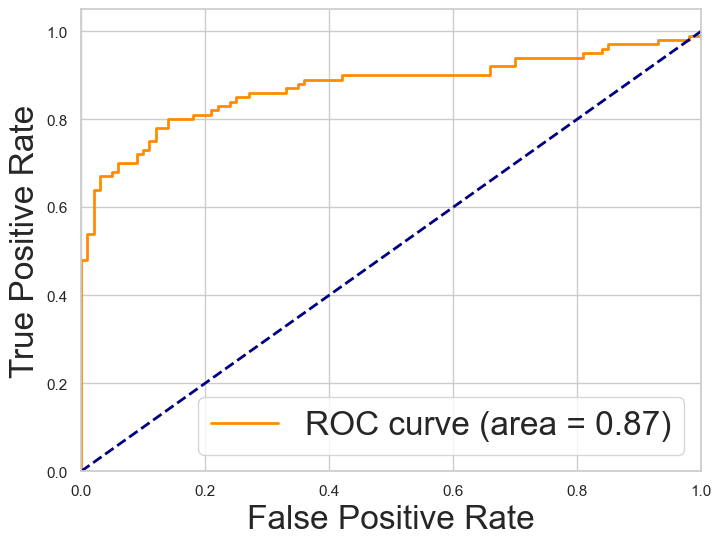}
        \caption{\(\roc\) (Dy. an. II)}
    \end{subfigure}
    \caption{Subfigures (a) and (b) illustrate anomalies in the ARCH(1) model,
        with normal paths shown in blue and anomalous trajectories in red.
        Subfigures (c) and (d) present the $\roc$ curves of our scoring function for the shock and
        dynamic anomaly II scenarios.}\label{fig:arch_compendium}
\end{figure}

\vspace{-15pt}\noindent \textbf{Results and Discussion.} The results in Table 1 indicate that the Markov depth detects with
high accuracy classic anomalies such as shocks/shifts, see Fig. \ref{fig:arch_shock} and \ref{fig:qeueue_shift}. Our approach is
also capable of detecting the more challenging dynamic anomalies, see Fig.
\ref{fig:arch_dyn_an_2} and \ref{fig:qeueue_dyn_an_1}. The corresponding $\roc$ curves in
Fig. \ref{fig:arch_compendium} and \ref{fig:queue_compendium} reveal its strong
performance on the different types of anomaly considered. In Section
\ref{sec:comparison_competitors}, it is compared with that of alternative anomaly
detection techniques, focusing on fixed-length paths since the competitors cannot
straightforwardly handle variable-length paths. The results of these experiments, presented in
Table \ref{tab:both_auc_comparison} and Fig.
\ref{fig:arch_roc_comparison} and \ref{fig:queue_roc_comparison} in the Appendix, show that our method performs similarly to the best of the competitors for
classic anomalies and significantly outperforms them for dynamic ones.

\begin{figure}[ht]
    \centering
    \begin{subfigure}[b]{0.24\linewidth}
        \centering
        \includegraphics[width=\linewidth]{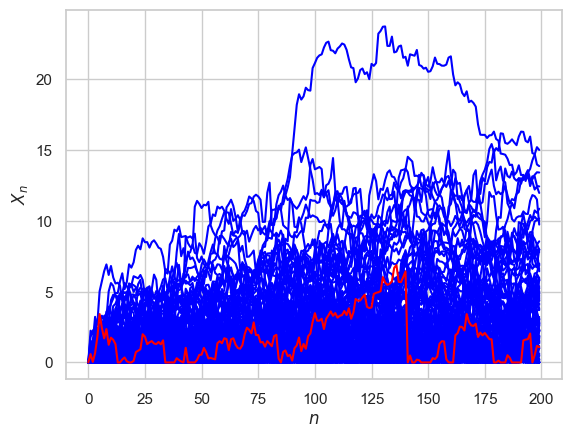}
        \caption{Dy. an. I}\label{fig:qeueue_dyn_an_1}
    \end{subfigure}
    \hfill
    \begin{subfigure}[b]{0.24\linewidth}
        \centering
        \includegraphics[width=\linewidth]{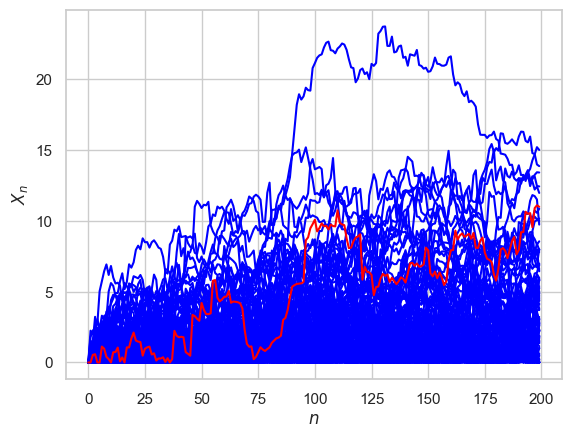}
        \caption{Shift}\label{fig:qeueue_shift}
    \end{subfigure}
    \hfill
    \begin{subfigure}[b]{0.24\linewidth}
        \centering
        \includegraphics[width=\linewidth]{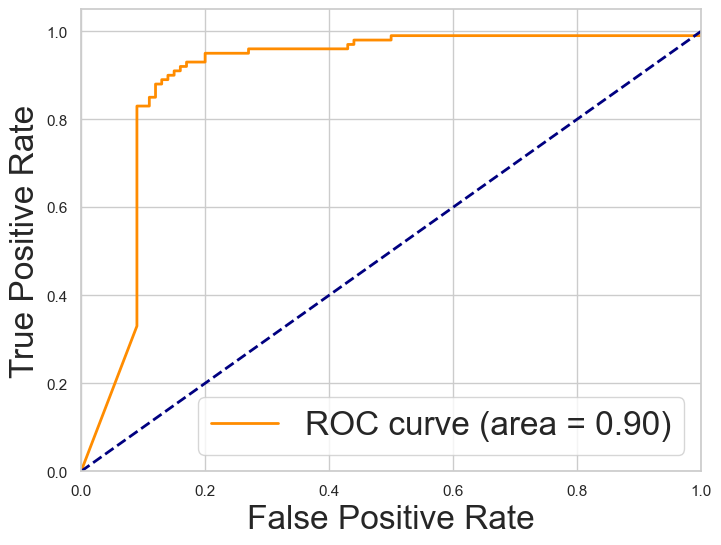}
        \caption{\(\roc\) (Dy. an. I)}
    \end{subfigure}
    \hfill
    \begin{subfigure}[b]{0.24\linewidth}
        \centering
        \includegraphics[width=\linewidth]{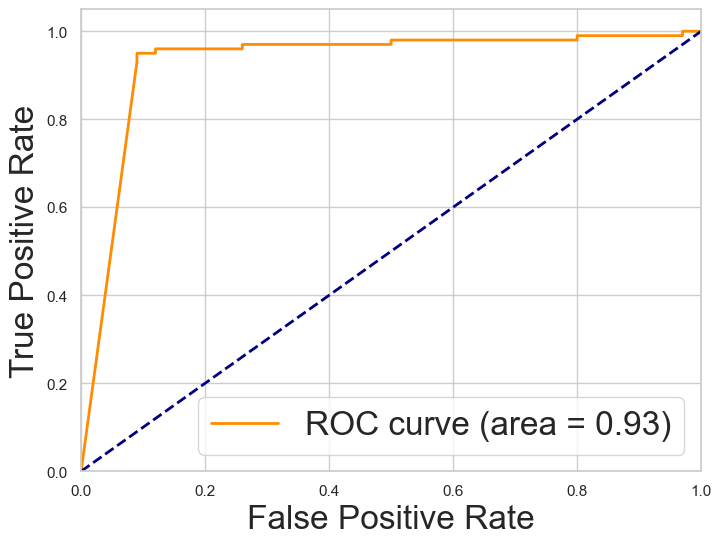}
        \caption{\(\roc\) (Shift)}
    \end{subfigure}
    \caption{Subfigures (a) and (b) show anomalies in the queuing model: blue lines are normal paths
        and red lines are anomalous paths. Subfigures (c) and (d) display the $\roc$ curves of our scoring function for these types of anomalies.}\label{fig:queue_compendium}
\end{figure}


\section{Some Concluding Remarks and Perspectives}\label{sec:perspectives}
In this paper, we have proposed a concept of statistical depth tailored to Markov
data/laws, relying on classic approaches in the multivariate setting. A list of desirable
properties have been shown to be satisfied when the Markov depth is based on popular
choices for the underlying multivariate depth. The concept is promoted throughout the
article as a very natural way of defining an ordering on the set of trajectories of
variable (finite) length, so as to extend the notions of order statistics and ranks for
Markov data. Theoretical and experimental results have also been presented in this
article, showing that Markov depth can be accurately estimated by its empirical version
and may serve to detect anomalous paths in a reliable manner. It may be used to perform a
wide variety of other statistical tasks, ranging from the design of homogeneity tests
between sets of trajectories of possibly different lengths to the clustering of Markov
paths through the design of dedicated visualization techniques. Beyond the experiments we
have presented here for illustrative purposes only, we plan to further investigate the
suitability of this tool for the analysis of Markov data.

\begin{credits}
    \subsubsection{\ackname} This study was funded by a Grant of Hi! Paris foundation.

    \subsubsection{\discintname}
    The authors have no competing interests to declare that are
    relevant to the content of this article.
\end{credits}
%
%
%
\bibliographystyle{splncs04}
\bibliography{biblio}
%





\newpage

\appendix

\section*{Appendix} 

\setcounter{section}{0} 

The Appendix is organized in two parts: Part \ref{sec:technical_appendix} provides
mathematical background, examples, and detailed technical proofs of the results
presented in the main article. Part \ref{sec:additional_experiments} offers expanded
descriptions of the experiments from Section \ref{sec:numerical_experiments} along
with additional numerical examples.

\renewcommand{\thesection}{A}
\section{Technical appendix}\label{sec:technical_appendix}

\setcounter{section}{0} 
\renewcommand{\thesection}{A\arabic{section}}
\renewcommand{\theequation}{\thesection.\arabic{equation}}

\section{Statistical Depths of Multivariate Distributions}\label{subsec:depth_supp}
Here we describe some well known multivariate statistical depth
functions and we state the validity of the properties
    {\PropGeneralInvariance}-{\PropContinuityInProb} described in section
\ref{sec:statistical_depth}\smallskip

\noindent\textbf{Halfspace depth.} The halfspace depth (\(H\Depth\)), introduced by \cite{Tukey1975}, is the first and most
widely known example of a multivariate depth function. For a point \(x\in\Set\) and a
probability distribution \(P\in\Probabilities(\SgB)\), \(\Depth_P(x)\) is defined as
infimum of the measures, according to \(P\), of the halfspaces containing \(x\), that is \(H\Depth_P(x)=\inf_{u\in\mathbb{S}_{d-1}}\{ P(\mathcal{H}_{u,x})\}\),
where $\mathcal{H}_{u,x}=\{y\in \mathbb{R}^d:\; \langle u,\; y-x\rangle \geq 0\}$. The validity of the properties {\PropGeneralInvariance}
to {\PropMonotonicity} for
all \(P\in\Probabilities(\SgB)\) was proved in Theorem 2.1 of \cite{ZuoSerfling00}, while
the property {\PropContinuityDepth} was established in Lemma 6.1 in \cite{DonohoG92}.
Regarding {\PropContinuityInProbUniform}, Theorem A.3 of \cite{NagyGijbels2016} shows
that it holds for all probability measures, \(P\) such that \(P(L)=0\) for all
hyperplanes \(L\subset\Set\). Property {\PropLipschitz} trivially holds for
$\mathcal{A}=\{H:\; H \text{ open half-spaces of } \mathbb{R}^d\}$.
Several algorithms
have been proposed for the exact computation of
\(H\Depth\), with a computational complexity of \(O(n^{d-1}\ln n)\) (see \cite{DyckerhoffM16,NagyDyckerhoffMozharovskyi2020} for an analysis on this subject)
and it has been established that determining the
exact \(H\Depth\) of a single
point in arbitrary dimensions, based on a sample of \(n\) points, is NP-hard \cite{JOHNSON197893}.\smallskip


\noindent\textbf{Lens depth.} Let \(x_1\) and \(x_2\) be two points in \(\Set\) and \(||\circ||\) be a norm in
\(\Set\). The lens \(L(x_1,x_2)\) of \(x_1\) and \(x_2\) is defined as the intersection
of the closed balls with radius \(||x_1-x_2||\), and centered at \(x_1\) and \(x_2\).
Using this concept, \cite{LiuModarres2011} defined the \textit{lens depth}
(\(L\Depth\)) of a point \(x\in\Set\) and a probability \(P\in\Probabilities(\SgB)\) as
the probability that \(x\) belongs to the random lens \(L(X_1,X_2)\), where \(X_1\) and
\(X_2\) are two independent random vectors sampled from \(P\).
\(L\Depth\) is orthogonally invariant when using the Euclidean distance on
and it satisfies \(\PropVanishingBoundary\) (see \cite{LiuModarres2011}). Examples 1, 2 and 3
in \cite{GeenensNietoFrancisci2023} show
that properties {\PropMaximalityCenter} and
    {\PropMonotonicity} are not satisfied in general. Sections 3.3 and 3.4 in \cite{GeenensNietoFrancisci2023} established the validity of
    {\PropContinuityDepth} and {\PropContinuityInProb} under weak conditions. The
computational efficiency of its sampled version is \(O(n^2)\) \cite{MM22}.
\smallskip

\noindent\textbf{Mahalanobis depth.} Given a \(d\)-dimensional distribution \(P\) with
expectation \(\mu_P\) and covariance matrix \(\Sigma_P\), the Mahalanobis norm of \(x\in\R^d\)
w.r.t. \(P\) is defined as \(||{x}||^2_{\Sigma_P}=x^T \Sigma_P^{-1}x\). This norm measures
the distance between the point \(x\) and the distribution \(P\). The Mahalanodis depth
is then defined as: \(MD_P(x)= \left(1+||x-\mu_P||_{\Sigma_P}^2\right)^{-1}\).
It satisfies properties {\PropGeneralInvariance} through {\PropContinuityDepth} for general distributions,
and it satisfies {\PropContinuityInProbUniform} for distributions with regular covariance matrix.
Regarding the computation of its sampling version, the execution times are virtually independent of
(moderate values of) $n$ and $d$ (the actual complexity being $O(n)$). See \cite{MM22} for more details.

\section{The Space $\mathbb{T}$ of Finite Length Paths}\label{subsec:topology_supp}
In order to facilitate the exposure, we will concentrate on the case \(d=1\). The
properties in the case \(d>1\) follow directly from the natural isomorphism between
\((\R^d)^n\) and \(\R^{dn}\). Denote by \(\R^{\infty}\) the
\textit{space of finite sequences over} $\R$, that is
\[
      \R^{\infty} = \left\{ (x_1, x_2, \ldots) \in \R^\mathbb{N} : \text{all but finitely many } x_i \text{ equal } 0 \right\},
\]

For every $n \in \mathbb{N}$, let $\mathrm{In}_{\R^n} : \R^n \to R^{\infty}$ denote the
canonical inclusion \(\mathrm{In}_{\R^n}(x_1, \ldots, x_n) = (x_1, \ldots, x_n, 0, 0,
\ldots)\). Therefore, $\R^{\infty} =
      \cup_{n=1}^{\infty}{\mathrm{In}_{\R^n}(\R^n)}$. This allows us to identify each
\(\mathrm{In}_{\R^n}(\R^n)\) with its corresponding \(\R^n\) and \(\SetSeq\) with
\(\R^{\infty}\). Endow each \(\R^n\) with its natural Borealian topology
\(\borel[\R^n]\). The finite topology \(\SgE\), also called the \textit{weak topology
induced by \(\{\borel[\R^n]\}_{n\in\N}\)} \cite[pp. 131]{Dugundji1978}, is defined as
follows :
\begin{equation*}
      \SgE = \{U\subset\SetSeq: U\cap\R^n\in\borel[\R^n]\;\;\forall n\in\N \}.
\end{equation*}

The following proposition resumes the main topological properties of the space
\((\SetSeq,\SgE)\). These results can be found in sections VI.8 and A.4 in
\cite{Dugundji1978}.

\begin{proposition}\label{propo:properties_finite_topology}{\sc (Properties of \(\SgE\))} The following properties hold true for \(\SgE\).
      \begin{enumerate}[label=(\roman*)]
            \item The space \((\SetSeq,\SgE)\) is complete.
            \item Each \(\R^n\), as a subspace of \((\SetSeq,\SgE)\), retains its
                  original topology \(\borel[\R^n]\).
            \item If \((Y,\mathcal{Y})\) is a topological space, then, a function
                  \(f:\SgE\to Y\) is continuous if and only if, for all \(n\in\N\), the
                  restriction \(f\circ\mathrm{In}_{\R^n}\) is continuous as a function of
                  \((\R^n,\borel[\R^n])\) to \((Y,\mathcal{Y})\).
            \item If \(\{{x_m}\}_{m\in\N}\) converges to \(x=(x_1,\ldots,x_n)\) in
                  \((\SetSeq,\SgE)\), then there exists \(M\) such that \(x_m\in\R^{n}\)
                  for all \(m\geq M\) and \(x_m\to x\) in \(\R^n,\borel[\R^n]\).
            \item \(\SgE\) is smallest topology that makes all the inclusions \(\mathrm{In}_{\R^n}\)
                  continuous (i.e. \((\SetSeq,\SgE)\) is a \textit{coherent space}).
      \end{enumerate}
\end{proposition}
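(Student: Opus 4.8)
All five statements are the standard properties of the weak (``coherent'', i.e.\ final) topology induced by an expanding cover, and can be extracted from \cite[Sections VI.8 and A.4]{Dugundji1978}; I would establish them in the order (ii), then (iii) and (v), then (iv), since each uses the preceding ones. Throughout, identify $\R^n$ with $\mathrm{In}_{\R^n}(\R^n)\subset\SetSeq$ (so $\R^n$ is a closed linear subspace of $\R^{n+1}$), and recall that $U\in\SgE$ exactly when $U\cap\R^m\in\borel[\R^m]$ for all $m$. For (ii): the trace of $\SgE$ on $\R^n$ is trivially contained in $\borel[\R^n]$; conversely, given $V\in\borel[\R^n]$, the ``cylinder'' $U=\{x\in\SetSeq:(x_1,\dots,x_n)\in V\}$ meets each $\R^m$ in the preimage of $V$ under either the coordinate projection $\R^m\to\R^n$ (if $m\geq n$) or the inclusion $\R^m\hookrightarrow\R^n$ (if $m<n$), hence in an open set, so $U\in\SgE$ and $U\cap\R^n=V$. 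For (iii): each $\mathrm{In}_{\R^n}$ is continuous since $\mathrm{In}_{\R^n}^{-1}(U)=U\cap\R^n$, which gives the ``only if'' part; conversely, if all $f\circ\mathrm{In}_{\R^n}$ are continuous and $W\subset Y$ is open, then $f^{-1}(W)\cap\R^n\in\borel[\R^n]$ for every $n$, so $f^{-1}(W)\in\SgE$. Statement (v) is the same computation: any topology making all inclusions continuous is contained in $\SgE$, which itself does so — i.e.\ $(\SetSeq,\SgE)$ is the coherent space of $\{\R^n\}_n$.

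The substantive point is (iv). By (ii) it suffices to show that a convergent sequence $x_m\to x$ is eventually contained in a single $\R^N$ with $x\in\R^N$, for then $\SgE$-convergence restricts to convergence in $(\R^N,\borel[\R^N])$. Suppose not: fixing $N_0$ with $x\in\R^{N_0}$, one can choose inductively $m_1<m_2<\cdots$ and $N_0<N_1<N_2<\cdots$ with $x_{m_k}\in\R^{N_k}\setminus\R^{N_k-1}$ (possible because each $x_m$ has finite support and, by assumption, infinitely many $x_m$ escape any given $\R^N$). I would then construct an $\SgE$-neighbourhood $U$ of $x$ containing none of the $x_{m_k}$, contradicting convergence: define open sets $V_N\subset\R^N$ by induction so that $V_N\cap\R^{N-1}=V_{N-1}$, $x\in V_N$ for $N\geq N_0$, and $V_N$ avoids the finitely many ``bad'' points $x_{m_k}$ with $N_k\leq N$, starting from a small punctured ball around $x$ in $\R^{N_0}$ and extending $V_{N-1}=G\cap\R^{N-1}$ ($G$ open in $\R^N$) to $V_N:=G\setminus\big((\R^{N-1}\setminus V_{N-1})\cup F_N\big)$, where $F_N$ is the set of bad points first appearing in $\R^N$ — this is open in $\R^N$, contains $x$, restricts to $V_{N-1}$ on $\R^{N-1}$, and misses $F_N$ because $\R^{N-1}\setminus V_{N-1}$ and $F_N$ are closed in $\R^N$ with $F_N\cap\R^{N-1}=\emptyset$. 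The $V_N$ increase, and the coherence relations yield $U\cap\R^N=V_N$ for $U:=\bigcup_N V_N$, so $U\in\SgE$, $x\in U$, yet $x_{m_k}\notin V_{N_k}=U\cap\R^{N_k}$ — the contradiction.

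Finally, (i) is a general feature of the coherent topology of an expanding sequence of closed subspaces, recorded in the cited sections of \cite{Dugundji1978}; equivalently it follows by realizing $(\SetSeq,\SgE)$ as the direct limit of the complete spaces $\R^n$ along closed embeddings. Everything except the construction of the separating neighbourhood $U$ in part (iv) is routine manipulation of the defining property of $\SgE$, so that construction is the step I expect to be the main obstacle.
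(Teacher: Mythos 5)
The paper offers no proof of this proposition at all: it simply attributes all five statements to Sections VI.8 and A.4 of Dugundji, so there is no in-paper argument to compare against, and your self-contained write-up is therefore strictly more than what the paper provides. Your arguments are correct: (ii), (iii) and (v) are the routine manipulations of the defining condition $U\cap\R^m\in\borel[\R^m]$ (in (ii) the cylinder set over $V$ is the right extension), and your inductive construction in (iv) of the separating open set $U=\bigcup_N V_N$ --- extending $V_{N-1}=G\cap\R^{N-1}$ to $V_N=G\setminus\bigl((\R^{N-1}\setminus V_{N-1})\cup F_N\bigr)$, which works because $\R^{N-1}\setminus V_{N-1}$ and the finite set $F_N$ are closed in $\R^N$ and $F_N\cap\R^{N-1}=\emptyset$ --- is exactly the classical argument showing that a convergent sequence in a coherent union is eventually confined to a single stratum. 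Two small corrections. First, in (iv) the initial set should be an ordinary open ball around $x$ in $\R^{N_0}$, not a ``punctured'' one: a punctured ball excludes $x$ itself, and no puncturing is needed since by construction none of the selected bad points lies in $\R^{N_0}$. Second, your argument for (v) actually establishes that $\SgE$ is the \emph{largest} (finest) topology making all inclusions continuous, which is the correct characterization of a coherent (final) topology; the proposition's wording ``smallest'' is a slip in the paper (the indiscrete topology also makes every inclusion continuous), so your proof quietly proves the statement that was intended. For (i), deferring to the realization of $(\SetSeq,\SgE)$ as a direct limit of the complete spaces $\R^n$ along closed embeddings is reasonable and matches the level of detail the paper itself adopts.
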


A direct consequence of part (iii) of Proposition \ref{propo:properties_finite_topology}
is that a set \(K\subset \SetSeq\) is compact if and only if, for all \(n\in\N\),
\(K\cap\R^n\) is compact in \(\borel[\R^n]\). We will say that a sequence
\(\{\x_n\}_{n\in\N}\) of elements of \(\SetSeq\) \textit{goes to infinity} when there
exist \(m,M\in\N\) such that \(\x_n\in\R^m\) for all \(n\geq M\) and the sequence
\(\{\x_n\}_{n\geq M}\) goes to infinity in \(\borel[\R^m]\)(\textit{i.e.}
\(\{||\x_n||\}_{n\geq M}\) goes to infinity with \(n\).)

\section{On Markov Kernel Inference}\label{subsec:kernel_supp}
A kernel \(\kernel\) admits a density w.r.t. the measure \(\lambda\)
if there is a non-negative function \(\pi\) defined in \(\Set\times\Set\) such that for
all \(x\in\Set\) and \(A\in\SgB\) we have
$  \kernel(x,A)=\int_{A}{\pi(x,y)d\lambda(y)}$.
For kernels that admit a density, the most widely used estimator of $\pi$ is the
\textit{Nadaraya-Watson} estimator, defined as
\begin{equation} \label{eq:nadaraya}
  \pi_{n}(x,y)=\frac{\sum_{i=0}^{n-1}h_n^{-1}K(h_n^{-1}(x-X_{i}))K(h_n^{-1}(y-X_{i+1}))}
  {\sum_{j=0}^{n-1}K(h_n^{-1}(x-X_{j}))},
\end{equation}
where \(h_n>0\) is a bandwidth parameter and \(K:\Set\to\R\) is a bounded measurable function that
satisfies \(\int_{\Set}|K(x)dx|=1\) and \(||x||\, |K(x)|\to 0\) as \(||x||\to\infty\). Consistency
and asymptotic normality of
this estimator was shown in \cite{AthreyaAtuncar1998} while strong consistency was established
in \cite{dorea2002strong}, and Berry-Essen type inequalities were obtained in \cite{basu1998berry}. In addition to the Nadaraya-Watson estimator, various other estimators have been
suggested in the literature, see for instance, \cite{Clem00,Lacour2007,lacour2008nonparametric}
and the compendium \cite{Sart2014}.\smallskip

\noindent\textbf{Estimating the conditional cdf.} When \(\Set\subset\R\), for \(x\) fixed, integrating over \(y\) in equation
\eqref{eq:nadaraya}, and letting \(G(t):=\int_{-\infty}^{t}{K(t)}\) we get the following
consistent estimator \(\hat{F}_x(y)\) of the cumulative distribution function of
\(X_{1}|X_0=x\)
\begin{equation}\label{eq:nadaray_cdf}
  \hat{F}_x(y)=\frac{\sum_{i=0}^{n-1}{K(h_n^{-1}(x-X_i))G(h_n^{-1}(y-X_{i+1}))}}{\sum_{j=0}^{n-1}K(h_n^{-1}(x-X_{j}))},
\end{equation}
therefore, the distribution function \(\hat{F}_x\) can be written as a weighted sum of
the distribution functions \(G(h_n^{-1}(y-X_{i+1}))\)

\section{On the Symmetry Property}\label{sec:ex_ppties}
We now discuss the property of maximality at path of centers of the
Markovian depth, showing an example of where it holds. We also discuss the concept of
symmetry centers and provide an example that shows why a property like
    {\PropMaximalityCenter} is not useful in the Markovian scenario.\smallskip

\noindent\textbf{Maximality at path of centers.} Consider the case where $d=1$ and the Markovian depth function \(D_{\Pi}(\mathbf{x})\) is
based on Tukey's half-space depth. In this situation, we have: \(\forall (x,y)\in
\mathbb{R}^2\) (\textit{c.f.} \cite[pp. 754]{CuevasFraiman2009})
\begin{equation}\label{eq:tukey_dimension_1}
    D_{\Pi_{x}}(y)=\min\left\{\Pi_x((-\infty,y]),\quad 1-\Pi_x((-\infty,y))+\kernel_x(\{y\}) \right\}.
\end{equation}

Assume that for all $x\in \mathbb{R}$, the distribution $\kernel_x$ is continuous and
non-degenerate. Then for all $x\in\R$, there is a unique $\theta(x)\in \mathbb{R}$ s.t. $\kernel_x((-\infty,\; \theta(x)])=1/2$. The point $\theta(x)$ has maximal Tukey's
depth of $1/2$ with respect to the distribution $\Pi_x$. As a result, the Markov depth
attains its maximum value of $1/2$ at the path $(x_0,\; \theta(x_0),\; \ldots,\;
    \theta^{(n-1)}(x_0))$ for any $n\geq 1$ and \(x_0\in\R\). The following example shows that this situation is quite common.

\begin{example}{\sc (Markov chain with continuous marginals)}
    Suppose that \(\kernel\) admits a non-constant density function $f(x,t)$ w.r.t. the Riemann's integral.
    Then, the distribution functions of \(\kernel_x\) are all continuous and non-degenerated, therefore
    their Markovian depth is maximal at path $(x_0,\; \theta(x_0),\; \ldots,\; \theta^{(n-1)}(x_0))$
    for any $n\geq 1$ and \(x_0\in\R\).
\end{example}

Many well-known Markov models have densities, for example Random Walks, Autoregressive
processes and Random coefficient autoregressive models. For more examples and a detailed
description, see \cite[Chapter 2]{markovChain2018}.\medskip

\noindent\textbf{Symmetry.} A way to define symmetry w.r.t. the null path $\mathbf{0}$
(\textit{i.e.} the infinite path with all components equal to $0$) in the path space is
by requiring that for all \(n\in\N\) and any Borel set $A\subset \mathbb{R}^{n+1}$: $\mathbb{P}\{ (X_0,\; \ldots,\; X_{n})\in A\}=\mathbb{P}\{ (X_0,\; \ldots,\;
    X_n)\in -A \}$, where $-A=\{\x=(x_0,\ldots,x_n) \in \R^{n+1}\text{ s.t. } (-x_0,\;
    \ldots,\; -x_n)\in A\}$. The following example exhibits a chain that fulfills this symmetry definition, but for which the null trajectory is unrealizable.

\begin{example}{\sc (Unrealizable symmetric path)} Consider the Random Walk defined as \(X_0=0\),
    \(X_{n+1}=X_n+\epsilon_n\) for \(n\geq 0\) with \(\{\epsilon_n\}_{n\geq 0}\) being an
    i.i.d. sequence of random variables such that
    \(\P(\epsilon_n=1)=\P(\epsilon_n=-1)=1/2\). This model is symmetric w.r.t the
    infinite null trajectory $\mathbf{0}$, however, this trajectory is unrealizable, as \(\P(X_{n+1}=X_{n})=0\) for all \(n\in \mathbb{N}\).
\end{example}


\section{Examples of Markov Depths}\label{sec:examples}
In this section we show the form of the Markovian depth in some scenarios.

\begin{example}{\sc (Markov Tukey's depth)}\label{ex:markov_tukey_d_1}
    Suppose \(\chain\) is a Markov chain with kernel \(\kernel\) taking values in \(E\subseteq\R\).
    Let \(\x=(x_0,\ldots,x_n)\in\SetSeq\) be such that \(\kernel_{x_i}\) is a continuous
    random variable for \(i=0,\ldots,n-1\). Then, the Markov depth \(\DepthSeq^H\), based on
    the halfspace depth has the form

    \begin{equation}
        \Depth^{H}_{\Pi}(\mathbf{x})=\sqrt[n]{\prod_{i=1}^{n}{\min{\left\{ \kernel_{x_{i-1}}\left((-\infty,x_{i}]\right), 1-\kernel_{x_{i-1}}\left((-\infty,x_{i}]\right)  \right\}}}}.
    \end{equation}
\end{example}




We now give
the form of the sampling versions of the Markovian depth for real valued Markov chains
with densities. We use the notation from section \ref{subsec:kernel_supp}.

\begin{example}{\sc (Sampling versions in \(\R\))}\label{ex:empirical_depth} Let \(\chain\) be a real valued,
    positive recurrent Markov chain with kernel
    \(\kernel\), having continuous density \(\pi\). Suppose we have a realization
    \(X=(X_0,\ldots,X_n)\) of \(\chain\), and let \(\x=(x_0,\ldots,x_m)\in\SetSeq\).
    Under this hypothesis, the Nadaraya-Watson estimator \eqref{eq:nadaray_cdf}, based on
    \(X\) can be used, and the estimators \(\hat{F}_{x_{i-1}}(y)\) of the marginal CDFs
    are continuous functions of \(y\). By example \ref{ex:markov_tukey_d_1},
    the sampling version of the
    Markov depth \(\DepthSeq^H\) adopts the form:
    \begin{equation*}
        \Depth^{H}_{\hat{\Pi}}(\mathbf{x}) =\sqrt[m]{\prod_{i=1}^{m}{ \min\left\{ \hat{F}_{x_{i-1}}(x_{i}), 1-\hat{F}_{x_{i-1}}(x_{i}) \right\} }},
    \end{equation*}
    where the $\hat{F}_{x_{i-1}}(x_{i})$ are defined as in
    \eqref{eq:nadaray_cdf}. Notice that the lengths of \(X\) and \(\x\) do not
    necessarily coincide.
\end{example}


\section{Technical Proofs}\label{sec:proofs}
The proofs of the theoretical results stated in the paper are detailed below.\medskip

\noindent\textbf{Proof of Proposition \ref{prop:limit}:}\label{subsec:limit_depth_supp}
As we $\mathbb{P}_{\nu}$ almost-surely have: $\forall n\geq 1$, $$ D_{\Pi}(X_0,\;
    \ldots,\;
    X_n)=\exp\left(\frac{1}{n}\sum\limits_{i=1}^{n}\log(D_{\Pi_{X_{i-1}}}(X_i))\right), $$
the result straightforwardly follows from the SLLN and the CLT for additive functionals
of aperiodic Harris recurrent Markov chains (refer to \textit{e.g.} Theorem 4.1 in
\cite{Tjostheim-2001}), combined with the delta method applied to the differentiable
function $x\mapsto \exp(x)$ (see \textit{e.g.} Theorem 3.1 in \cite{vdV98}). The
asymptotic variance is given by
\begin{equation}\label{eq:asymptotic_vaiance}
    \sigma^2_{\Pi}=\int_{(x,y)}
    \log^2(D_{\Pi_x}(y))\invariantMeasure d(x)\Pi_x(dy)-\int_{x}{\left(\int_{y}\log(D_{\Pi_x}(y))\Pi(x, dy)\right)^2}\invariantMeasure(dx).
\end{equation}


\noindent\textbf{Proof of Theorem \ref{thm:ZS_properties}:}
\textit{Independence from the initial law:} This property follows directly from definition \ref{def:markovian_depth} as the Markovian
sample path depth is defined in terms of the transition probability kernel only.

\vspace{-10pt}\paragraph{Affine invariance:}
Let \(f(x)=Ax+b\) be an affine invariant transformation. From the definition
\ref{def:markovian_depth}, it is clear that if we show
\(\Depth_{f(\kernel)_{f(y_0)}}\left(f(y_1)\right)=\Depth_{\kernel_{y_0}}(y_1)\) for all,
\(y_0, y_1\in\Set\) then the claim of the theorem follows immediately.

Denote by \(F_{y_0}\) the distribution of \(f(Y)\), where \(Y\) is a random variable with
distribution \(\kernel_{y_0}\). The affine invariance of \(\Depth\) implies that
\begin{equation}\label{eq:invariance_underlying_dist}
    \Depth_{F_{y_0}}\left(f(y_1)\right)=\Depth_{\kernel_{y_0}}(y_1).
\end{equation}

On the other hand, notice that, for any \(C\in\SgB\) and \(y\in\Set\) we have
\begin{equation*}
    f(\kernel)(y,C)=\P\bigl(f\left(X_1\right)\in C| f\left(X_0\right)=y\bigr)=\kernel\left(f^{-1}(y), f^{-1}(C)\right),
\end{equation*}
therefore,
\(f(\kernel)_{f(y_0)}(C)=\kernel(y_0,f^{-1}(C))=\kernel_{y_0}(f^{-1}(C))=F_{y_0}(C)\),
which shows that the distributions \(F_{y_0}\) and
\(f(\kernel)_{f(y_0)}\) coincide. The result now follows from
\eqref{eq:invariance_underlying_dist}.

\vspace{-10pt}\paragraph{Vanishing at infinity:} We need the following lemma.

\begin{lemma}\label{lemma:d_m_lower_bound}
    Let \(\x=(x_0,\ldots,x_{m})\in\Set^{m+1}\). For any \(\epsilon>0\), if there is \(j\)
    such that \(D(x_j, \kernel_{x_{j-1}} )<\epsilon\), then
    \(\DepthSeq(\x)<\epsilon^{1/m}\).
\end{lemma}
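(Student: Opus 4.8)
The plan is to read off the result directly from Definition~\ref{def:markovian_depth}: for \(\x=(x_0,\ldots,x_m)\), the Markov depth \(\DepthSeq(\x)\) is the geometric mean of the \(m\) transition depths \(\Depth_{\kernel_{x_{i-1}}}(x_i)\), \(i=1,\ldots,m\) (so \(D(x_j,\kernel_{x_{j-1}})\) in the lemma's notation is exactly \(\Depth_{\kernel_{x_{j-1}}}(x_j)\)). Since the multivariate depths under consideration are normalized to take values in \([0,1]\) — the half-space, simplicial, lens and Mahalanobis depths recalled in Section~\ref{subsec:depth_supp} all do, and this range is inherited by the Markov depth as noted right after Definition~\ref{def:markovian_depth} — a single factor that is small forces the whole product, hence its \(m\)-th root, to be small. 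So the proof is essentially a one-line computation once the normalization is invoked.

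Concretely, I would split into two cases. If \(\Depth_{\kernel_{x_{j-1}}}(x_j)=0\), then the product \(\prod_{i=1}^m \Depth_{\kernel_{x_{i-1}}}(x_i)\) vanishes and, by the convention \(\sqrt[m]{0}=0\), we get \(\DepthSeq(\x)=0<\epsilon^{1/m}\). Otherwise all factors are strictly positive and bounded by \(1\), so I pass to the logarithmic form \(\DepthSeq(\x)=\exp\left(\frac1m\sum_{i=1}^m \log \Depth_{\kernel_{x_{i-1}}}(x_i)\right)\), where every summand is \(\le 0\) and the \(j\)-th one satisfies \(\log \Depth_{\kernel_{x_{j-1}}}(x_j)<\log\epsilon\). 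Summing, bounding the other \(m-1\) terms by \(0\), and dividing by \(m\) yields \(\frac1m\sum_{i=1}^m \log \Depth_{\kernel_{x_{i-1}}}(x_i)\le \frac1m\log\Depth_{\kernel_{x_{j-1}}}(x_j)<\frac1m\log\epsilon=\log(\epsilon^{1/m})\), and applying the increasing map \(\exp\) gives \(\DepthSeq(\x)<\epsilon^{1/m}\).

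There is no real obstacle here; the only point that deserves care is the normalization \(\Depth_P(\cdot)\le 1\), which is precisely what lets us discard the remaining \(m-1\) factors "for free" and obtain the clean bound \(\epsilon^{1/m}\) rather than a constant multiple of it. I would either state this range assumption explicitly or invoke the inheritance of the \([0,1]\) range by \(\DepthSeq\); under a weaker bound \(\Depth_P(\cdot)\le c\) the identical argument would only give \(\DepthSeq(\x)<c^{(m-1)/m}\epsilon^{1/m}\), which is still enough for the intended use (vanishing at infinity), but the stated form corresponds to \(c=1\).
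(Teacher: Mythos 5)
Your proof is correct and follows essentially the same route as the paper's one-line argument, which simply bounds the $m-1$ remaining factors by $1$ (using that $\Depth$ takes values in $[0,1]$) so that the product is at most $\epsilon$ and its $m$-th root at most $\epsilon^{1/m}$. Your version is slightly more careful — handling the zero factor case separately and tracking the strict inequality, which the paper's proof glosses over by writing $\leq$ — but there is no substantive difference in approach.
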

\begin{proof}[Lemma \ref{lemma:d_m_lower_bound}]
    Using that \(0<\Depth<1\), we have
    \(\sqrt[m]{\prod_{i=1}^{m}D_{\Pi_{x_{i-1}}}(x_i)}\leq \epsilon^{1/m}\).
\end{proof}

Let \(\{\x_n\}_{n=1}^{\infty}\subset\SetSeq\) be a sequence that tends to infinity in
\(\SgE\). The topology of the torus implies that there exists an integer \(m\) such
\(\x_n\in\Set^{m+1}\) for \(n\) big enough and that \(||\x_n||_{\infty}:=\max_{0\leq i
    \leq m}||x_{i,n}||\) tends to infinity with \(n\). Suppose that \(\DepthSeq(x_n)\) does
not converge to \(0\), then, there is an \(\epsilon>0\) such that
\(\DepthSeq(x_n)>\epsilon\) for all \(n\). Therefore, by the contrapositive of Lemma
\ref{lemma:d_m_lower_bound}, for all \(n\in\N\) and \(i=0,\ldots,m\) we have
\(\Depth_{\kernel_{x_{i-1,n}}}(x_{i,n})>\epsilon^{m}\).

Because \(\Depth_{\kernel_{x_0}}\) satisfies {\PropVanishingBoundary}, we have that there
exists \(C_1\) such that \(\Depth_{\kernel_{x_0}}(y)<\epsilon^m\) for all \(y\) with
\(||y||>C_1\), therefore, \(||x_{1,n}||\leq C_1\) for all \(n\). By the same reasoning,
but using that \(\sup_{||x||\leq C_1}D_{\Pi_x}(y)\to 0\) as \(\vert\vert y\vert\vert \to
\infty\), we obtain that there exists \(C_2\) such that \(||x_{2,n}||\leq C_2\) for all
\(n\). Repeating the same argument for \(i=2,\ldots,m\), we get that the sequences
\(\{x_{2,n}\}_{n\geq 1},\ldots,\{x_{m,n}\}_{n\geq 1}\) are all bounded, therefore
\(||\x_n||_{\infty}\) is also bounded, which contradicts our assumption that
\(||\x_n||_{\infty}\) tends to infinity.

\noindent\textbf{Proof of Proposition \ref{thm:MaximalityCenter}:}
This is a direct consequence of the monotonicity of the exponential and logarithmic
functions.

\noindent\textbf{Proof of Theorem \ref{th:continuity_in_x}:}\label{proof:continuity_in_x}
\textit{Continuity in \(\x\):} If \(\x_n\) is a sequence in \(\SetSeq\) that converges to \(\x\), then
\(\x_n\in\Set^{m+1}\) for \(n\) big enough. Moreover, \(\x_n=(x_{0,n},\ldots,x_{m,n})\)
converges to \(\x=(x_{0},\ldots,x_{m})\) in \(\Set^{m+1}\) if and only if \(x_{i,n}\to
x_i\) in \(\Set\) for \(i=1,\ldots,m\). Notice that for each \(i=1,\ldots,m\) and \(n\)
big enough we have
\begin{multline*}
    |\Depth_{\kernel_{x_{i-1,n}}}(x_{i,n}) - \Depth_{\kernel_{x_{i-1}}}(x_{i})|\leq|\Depth_{\kernel_{x_{i-1,n}}}(x_{i,n}) - \Depth_{\kernel_{x_{i-1}}}(x_{i,n}) |+\\
    |\Depth_{\kernel_{x_{i-1}}}(x_{i,n}) - \Depth_{\kernel_{x_{i-1}}}(x_{i})|.
\end{multline*}

The first term on the right hand side of the previous inequality tends to \(0\) as \(n\)
tends to infinity thanks to property {\PropContinuityInProbUniform} of \(\Depth\) and the
weakly convergence of \(\kernel_{x_{i-1,n}}\) to \(\kernel_{x_{i-1}}\). Similarly, the
second term goes to \(0\) as \(n\) goes to infinity in virtue of {\PropContinuityDepth}.
This shows that \(\Depth_{\kernel_{x_{i-1,n}}}(x_{i,n})\to_n
\Depth_{\kernel_{x_{i-1}}}(x_{i})\) for \(i=1,\ldots,m\). The continuity of \(\DepthSeq\)
now follows from \eqref{def:markovian_depth} and the continuity of \(y\mapsto\sqrt[m]{y}\).

\paragraph{Continuity in \(\kernel\):} For each \(\x\in\SetSeq\) we have that \(|\DepthSeq[\kernel^{(n)}](\x) - \DepthSeq[\kernel](\x)|\)
is smaller than \(\max_{0\leq i \leq m-1}{|\Depth_{\kernel^{(n)}_{x_i}}(x_{i+1}) - \Depth_{\kernel_{x_i}}(x_i+1)|}\),
the result now follows from {\PropContinuityInProb}.

\noindent\textbf{Proof of Theorem \ref{th:non_asymptotic_bound}:}
We prove the following stronger version of Theorem \ref{th:non_asymptotic_bound}

\begin{theorem}\label{th:strong_non_asymptotic_bound}
    Let $n\geq 1$ and $\mathbf{x}=(x_0,\; \ldots,\; x_n)\in E^{n+1}$. Consider two
    transition probabilities $\Pi$ and $\hat{\Pi}$ on $E$. Suppose that \(\Depth\) fulfills
    property {\PropLipschitz} and $C_d||\kernel_{x_i} - \hat{\kernel}_{x_i}||_{\mathcal{A}}<\min\{\Depth_{\kernel_{x_i}}(x_{i+1}), \Depth_{\hat{\kernel}_{x_i}}(x_{i+1})\}$ for $i=1,\; \ldots,\; n$. Then,
    \begin{equation}\label{eq:strong_bound}
        |\DepthSeq(\x) - \DepthSeq[\hat{\kernel}](\x)| \leq \frac{7}{4}C_d\frac{\DepthSeq(\x)}{H_{\kernel, \hat{\kernel}}(\x)} \max_{i=0,\; \ldots,\; n-1}||\kernel_{x_i} - \hat{\kernel}_{x_i}||_{\mathcal{A}},
    \end{equation}
    where
    \begin{equation*}
        H_{\kernel, \hat{\kernel}}(\mathbf{x}):=\frac{n}{\sum_{i=0}^{n-1} { 1/\sqrt{ \Depth_{\kernel_{x_i}}(x_{i+1})\Depth_{\hat{\kernel}_{x_i}}(x_{i+1}) }} }
    \end{equation*}
    represents the harmonic mean of  $\sqrt{ \Depth_{\kernel_{x_i}}(x_{i+1})\Depth_{\hat{\kernel}_{x_i}}(x_{i+1}) },\; i=0,\ldots,n-1 $.
\end{theorem}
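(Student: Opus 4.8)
The plan is to pass to logarithms, turning the geometric mean $D_\Pi(\mathbf{x})=\bigl(\prod_{i=0}^{n-1}D_{\Pi_{x_i}}(x_{i+1})\bigr)^{1/n}$ into an arithmetic mean, and to estimate each logarithmic increment by an elementary inequality whose denominator is exactly the geometric mean of the two depths at that step --- this is what will make the harmonic mean $H_{\Pi,\hat\Pi}(\mathbf{x})$ of the quantities $\sqrt{D_{\Pi_{x_i}}(x_{i+1})D_{\hat\Pi_{x_i}}(x_{i+1})}$ emerge. Write $a_i=D_{\Pi_{x_i}}(x_{i+1})$, $b_i=D_{\hat\Pi_{x_i}}(x_{i+1})$ and $\delta_i=C_d\|\Pi_{x_i}-\hat\Pi_{x_i}\|_{\mathcal{A}}$ for $i=0,\ldots,n-1$, so that $D_\Pi(\mathbf{x})=(\prod_i a_i)^{1/n}$ and $D_{\hat\Pi}(\mathbf{x})=(\prod_i b_i)^{1/n}$. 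Property {\PropLipschitz} gives $|a_i-b_i|\le\delta_i$, and the standing hypothesis of the theorem then forces $|a_i-b_i|<\min(a_i,b_i)$; in particular all $a_i,b_i$ are strictly positive, so I may set $\theta:=\frac{1}{n}\sum_{i=0}^{n-1}(\log a_i-\log b_i)$ and record the identity $D_\Pi(\mathbf{x})-D_{\hat\Pi}(\mathbf{x})=D_\Pi(\mathbf{x})\,(1-e^{-\theta})$.

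First I would prove the elementary inequality $|\log r|\le|r-1|/\sqrt{r}$ for every $r>0$. Substituting $s=\sqrt{r}$, this reads $2|\log s|\le|s-s^{-1}|$: for $s\ge1$ the function $g(s)=s-s^{-1}-2\log s$ vanishes at $s=1$ and has $g'(s)=(1-s^{-1})^2\ge0$, and the case $s\le1$ follows by the symmetry $s\leftrightarrow s^{-1}$. Applying this with $r=a_i/b_i$ yields $|\log a_i-\log b_i|\le|a_i-b_i|/\sqrt{a_ib_i}$, hence $|\theta|\le S:=\frac{1}{n}\sum_{i=0}^{n-1}|a_i-b_i|/\sqrt{a_ib_i}$. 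Two observations about $S$ will be needed. On the one hand, $|a_i-b_i|<\min(a_i,b_i)$ makes each summand strictly less than $\min(a_i,b_i)/\sqrt{a_ib_i}=\sqrt{\min(a_i,b_i)/\max(a_i,b_i)}\le1$, so $S\le1$. On the other hand, bounding every numerator by $\max_i|a_i-b_i|\le\max_i\delta_i$ gives $S\le\max_i\delta_i\cdot\frac{1}{n}\sum_{i=0}^{n-1}1/\sqrt{a_ib_i}=\max_i\delta_i/H_{\Pi,\hat\Pi}(\mathbf{x})$.

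It then remains to turn $|\theta|\le S$ into a bound on $|1-e^{-\theta}|$. Since $x\mapsto(e^x-1)/x$ is increasing on $(0,\infty)$, one has $e^x-1\le(e-1)x$ for $0<x\le1$; combining the case $\theta\ge0$ (where $|1-e^{-\theta}|=1-e^{-\theta}\le\theta\le(e-1)|\theta|$) with the case $\theta<0$ (where $|1-e^{-\theta}|=e^{|\theta|}-1\le(e-1)|\theta|$, using $|\theta|\le S\le1$) gives $|1-e^{-\theta}|\le(e-1)|\theta|\le(e-1)S$. Substituting into the identity from the first paragraph,
\[
  |D_\Pi(\mathbf{x})-D_{\hat\Pi}(\mathbf{x})|=D_\Pi(\mathbf{x})\,|1-e^{-\theta}|\le(e-1)\,D_\Pi(\mathbf{x})\,\frac{\max_i\delta_i}{H_{\Pi,\hat\Pi}(\mathbf{x})},
\]
and since $e-1<7/4$ and $\delta_i=C_d\|\Pi_{x_i}-\hat\Pi_{x_i}\|_{\mathcal{A}}$, this is exactly \eqref{eq:strong_bound}. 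Theorem \ref{th:non_asymptotic_bound} then follows immediately, since its hypothesis $\min(a_i,b_i)>\epsilon$ forces $\sqrt{a_ib_i}>\epsilon$ for every $i$ and hence $H_{\Pi,\hat\Pi}(\mathbf{x})\ge\epsilon$.

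I do not anticipate any real difficulty here: the only point needing a little thought is the choice of the elementary inequality in the second step. Using $|\log r|\le|r-1|/\sqrt{r}$ rather than the cruder $|\log r|\le|r-1|/\min(r,1)$ is precisely what symmetrizes the estimate and produces $\sqrt{a_ib_i}$ --- hence the harmonic mean $H_{\Pi,\hat\Pi}(\mathbf{x})$ --- in the denominator; and one must keep the hypothesis $|a_i-b_i|<\min(a_i,b_i)$ active, as it is what ensures $S\le1$ and so makes the constant $e-1$ (a fortiori $7/4$) admissible in the exponential estimate.
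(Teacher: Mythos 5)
Your proof is correct and follows essentially the same route as the paper's: pass to logarithms, bound each increment via $|\log r|\le |r-1|/\sqrt{r}$ (the same inequality the paper writes as $|\ln(1+t)|\le |t|/\sqrt{1+t}$) so that the geometric means $\sqrt{a_ib_i}$ — and hence the harmonic mean $H_{\kernel,\hat{\kernel}}(\x)$ — appear, check that the resulting exponent has modulus at most $1$, and finish with a linear bound on $|1-e^{t}|$. The only differences are cosmetic: you prove the two elementary inequalities from scratch rather than citing them, and your exponential estimate yields the slightly sharper constant $e-1$ in place of $7/4$.
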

\begin{proof}[Theorem \ref{th:strong_non_asymptotic_bound}]
    Let \(h_i=\DepthSeq[\hat{\kernel}_{x_i}](x_{i+1})-\DepthSeq[\kernel_{x_i}](x_{i+1})\), then, we can write \(\ln{\DepthSeq[\hat{\kernel}_{x_i}](x_{i+1})} = \ln{\DepthSeq[\kernel_{x_i}](x_{i+1})}+\ln(1+{h_i}/{\DepthSeq[\kernel_{x_i}](x_{i+1})})\)
    and
    \begin{equation}
        |\DepthSeq(\x)-\DepthSeq[\hat{\kernel}](\x)|=\DepthSeq(\x)\left| 1-\exp{\left(\frac{1}{n}\sum_{i=0}^{n-1}\ln\left(1+\frac{h_i}{\DepthSeq[\kernel_{x_i}](x_{i+1})}\right)\right)}  \right|.\label{eq:bound_diff_depths}
    \end{equation}
    By {\PropLipschitz} and our hypothesis, we have \[|h_i|\leq C_d||\kernel_{x_i} -
        \hat{\kernel}_{x_i}||_{\mathcal{A}}<\min\{\Depth_{\kernel_{x_i}}(x_{i+1+}),
        \Depth_{\hat{\kernel}_{x_i}}(x_{i+1})\}\]  for \(i=0,\ldots,n-1\),
    therefore, \(|h_i|/
    \min\{\Depth_{\kernel_{x_i}}(x_{i+1}), \Depth_{\hat{\kernel}_{x_i}}(x_{i+1})\}<1\)
    for all \(i\). Using the inequality \(|\ln{(1+t)}|\leq |t|/\sqrt{(1+t)}\), valid for
    \(t\geq -1\) \cite[pp. 160]{Bullen1998}, and that
    \((h_i/\Depth_{\kernel_{x_i}}(x_{i+1}))(1+h_i/\Depth_{\kernel_{x_i}}(x_{i+1}))^{-1}=h_i/\Depth_{\hat{\kernel}_{x_i}}(x_{i+1})\),
    we get
    \begin{align}
        \left| \ln\left(1+\frac{h_i}{\DepthSeq[\kernel_{x_i}](x_{i+1})}\right) \right| & \leq \frac{|h_i|}{\sqrt{ \Depth_{\kernel_{x_i}}(x_{i+1})\Depth_{\hat{\kernel}_{x_i}}(x_{i+1}) }}, \; i=0,\ldots,n-1\label{eq:log_bound}
    \end{align}
    which is smaller than \(1\). Therefore, \(|\frac{1}{n}\sum_{i=0}^{n-1}\ln
    (1+{h_i}/{\DepthSeq[\kernel_{x_i}](x_{i+1})})|<1\). This allows us to apply the
    inequality \(|1-e^t|<\frac{7}{4}|t|\) (valid for \(|t|<1\))\cite[pp. 82]{Bullen1998}
    on equation \eqref{eq:bound_diff_depths}, which combined with equation
    \eqref{eq:log_bound} shows that
    \begin{align*}
        |\DepthSeq(\x)-\DepthSeq[\hat{\kernel}](\x)| & \leq \frac{7}{4}\DepthSeq(\x)\frac{1}{n} \sum_{i=0}^{n-1}{\frac{|h_i|}{\sqrt{ \Depth_{\kernel_{x_i}}(x_{i+1})\Depth_{\hat{\kernel}_{x_i}}(x_{i+1}) }}}.
    \end{align*}
    Equation \eqref{eq:strong_bound} now follows from {\PropLipschitz}.
\end{proof}

Now we proceed to the proof of Theorem \ref{th:non_asymptotic_bound}. Notice that
if \(C_d||\kernel_{x_i} - \hat{\kernel}_{x_i}||_{\mathcal{A}}\) is greater than \(
\min\{\Depth_{\kernel_{x_i}}(x_{i+1}), \Depth_{\hat{\kernel}_{x_i}}(x_{i+1})\}\) for some
\(i\in\{0,\; \ldots,\; n-1\}\), then \( C_d \max_{i=0,\ldots,n-1}||\kernel_{x_i} -
\hat{\kernel}_{x_i}||_{\mathcal{A}}>\epsilon\), which implies that the right hand side of
\eqref{eq:bound} is bigger than \((7/4)\DepthSeq(\x)\). In this case, the
inequality is trivially satisfied because \(\DepthSeq(\x)\) and
\(\DepthSeq[\hat{\kernel}](\x)\) are both between \(0\) and \(1\). On the other hand,
when \(C_d||\kernel_{x_i} - \hat{\kernel}_{x_i}||_{\mathcal{A}}\) is smaller than \(
\min\{\Depth_{\kernel_{x_i}}(x_{i+1}), \Depth_{\hat{\kernel}_{x_i}}(x_{i+1})\}\) for all
\(i\), we can apply Theorem \ref{th:strong_non_asymptotic_bound}. Equation
\eqref{eq:bound} now follows from \eqref{eq:strong_bound} using that
\(\sqrt{ \Depth_{\kernel_{x_i}}(x_{i+1})\Depth_{\hat{\kernel}_{x_i}}(x_{i+1}) }\) is greater than \(\min
\{D_{\hat{\kernel}_{i}}(x_{i+1}),D_{\kernel_{i}}(x_{i+1})\}>\epsilon\).

\setcounter{section}{0} 
\renewcommand{\thesection}{B}
\section{Additional Numerical Experiments}\label{sec:additional_experiments}

\setcounter{section}{0} 
\renewcommand{\thesection}{B\arabic{section}}
\renewcommand{\theequation}{\thesection.\arabic{equation}}

This appendix provides additional numerical experiments demonstrating the practical
utility of the Markov depth concept. Section \ref{sec:supp_anomaly_detection} continues
the experiments from Section~\ref{sec:numerical_experiments} by comparing our method with
competitors. Section \ref{sec:visualization_clustering} presents
Markovian DD-plots for clustering, and Section \ref{sec:homogeneity_testing} shows
how to use Markov depth for homogeneity testing. The code for all experiments, as well
as high resolution versions of the images presented, is
available at {\githubRepo}.

\section{Anomaly Detection: a Benchmark Study}\label{sec:supp_anomaly_detection}\label{sec:comparison_competitors}

In order to compare our proposal (\(\DepthSeq\)) against the most commonly used methods
for anomaly detection in multivariate data, specifically Isolation Forest (IF), Local
Outlier Factor (LOF), and Mahalanobis Depth (MD), we must limit our analysis to scenarios
where all trajectories are of equal length. For this experiment, we generated 8 datasets
(one for each type of anomaly and model), each containing 100 trajectories of 200 points,
with 5\% of the trajectories exhibiting a specific type of anomaly. We then applied our
method alongside the previously mentioned anomaly detection algorithms to each dataset.
For IF and LOF we have used the implementation contained in the python library
\textit{sklearn}, while for the Mahalanobis depth we have used the
Python package \textit{data-depth}, all parameters used in the algorithms were set to
their default values. For our Markovian depth algorithm, we generated a sample of 10
trajectories of size 200 in order to obtain an estimator of the kernel. The \(\roc\)
curves (Fig. \ref{fig:arch_roc_comparison} and \ref{fig:queue_roc_comparison}) and the
AUC (Table \ref{tab:both_auc_comparison}) clearly show that, for classical anomalies
(shock and shift) our method performs similarly to the best between IF, LOF and MD, while
for dynamic anomalies, it outperforms it.

\vspace{-25pt}\begin{table}[h]
    \caption{Comparison of the AUC for different classifiers.}
    \vspace{5pt}
    \centering
    \label{tab:both_auc_comparison}
    \begin{tabular}{lccccccccc}
        \toprule
                           & \multicolumn{4}{c}{ARCH(1) model} &      & \multicolumn{4}{c}{Queuing model}                                                         \\
        \cmidrule(lr){2-5} \cmidrule(lr){7-10}
        Anomaly type       & IF                                & LOF  & MD                                & \(\DepthSeq\) &  & IF   & LOF  & MD   & \(\DepthSeq\) \\
        \midrule
        Shock              & 0.75                              & 0.82 & 0.68                              & 0.85          &  & 0.67 & 0.98 & 0.44 & 0.98          \\
        Dynamic anomaly I  & 0.42                              & 0.63 & 0.52                              & 0.84          &  & 0.59 & 0.80 & 0.80 & 0.94          \\
        Dynamic anomaly II & 0.68                              & 0.90 & 0.91                              & 0.99          &  & 0.64 & 0.55 & 0.50 & 0.85          \\
        Shift              & 1                                 & 1    & 0.6                               & 1             &  & 0.93 & 0.99 & 0.69 & 0.98          \\
        \bottomrule
    \end{tabular}
\end{table}

\vspace{-25pt}\begin{figure}[h]
    \centering
    \begin{subfigure}[b]{0.49\textwidth}
        \centering
        \includegraphics[height=3cm, width=\textwidth]{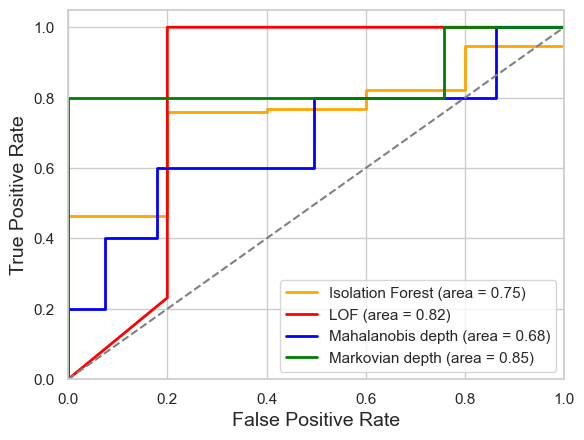}
        \caption{Shock}
    \end{subfigure}
    \hfill
    \begin{subfigure}[b]{0.49\textwidth}
        \centering
        \includegraphics[height=3cm, width=\textwidth]{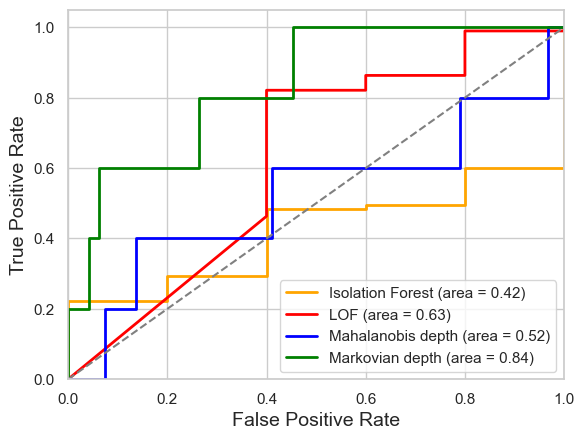}
        \caption{Perturbed mean (Dyn. an. II)}
    \end{subfigure}

    \begin{subfigure}[b]{0.49\textwidth}
        \centering
        \includegraphics[height=3cm, width=\textwidth]{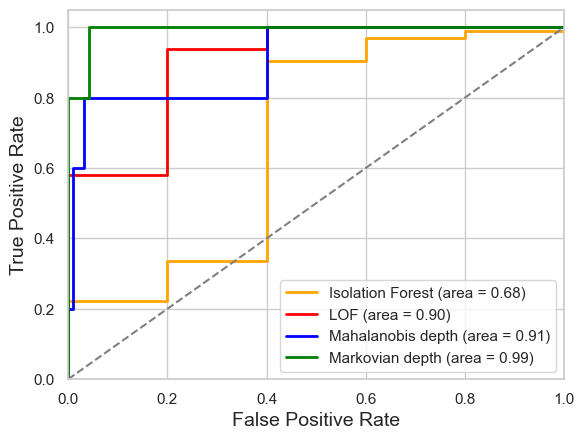}
        \caption{Increasing volatility (Dyn. an. II)}
    \end{subfigure}
    \hfill
    \begin{subfigure}[b]{0.49\textwidth}
        \centering
        \includegraphics[height=3cm, width=\textwidth]{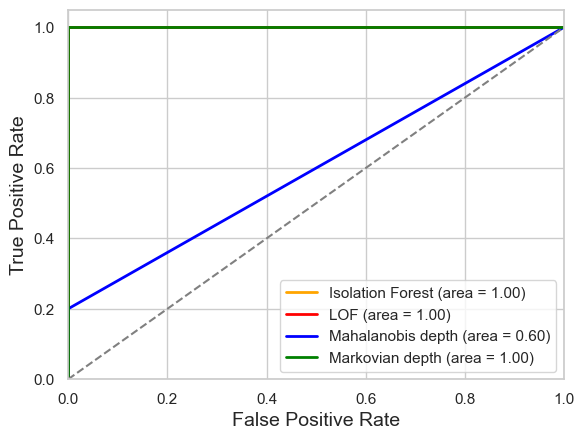}
        \caption{Deterministic mean (Shift)}
    \end{subfigure}

    \caption{\(\roc\) curves for each type of anomaly in the ARCH(1) model.}\label{fig:arch_roc_comparison}
\end{figure}

\begin{figure}[h]
    \centering
    \begin{subfigure}[b]{0.49\textwidth}
        \centering
        \includegraphics[height=2.8cm, width=\textwidth]{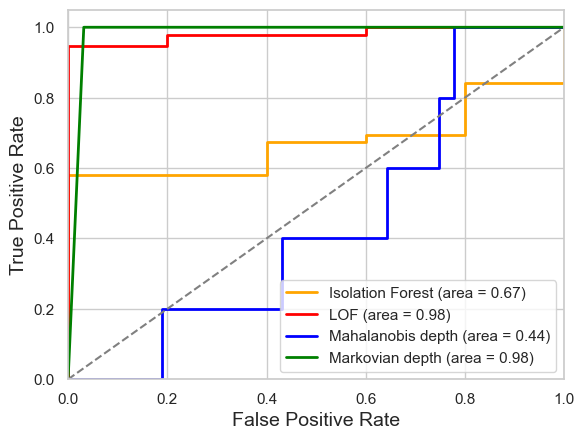}
        \caption{Shock}
    \end{subfigure}
    \hfill
    \begin{subfigure}[b]{0.49\textwidth}
        \centering
        \includegraphics[height=2.8cm, width=\textwidth]{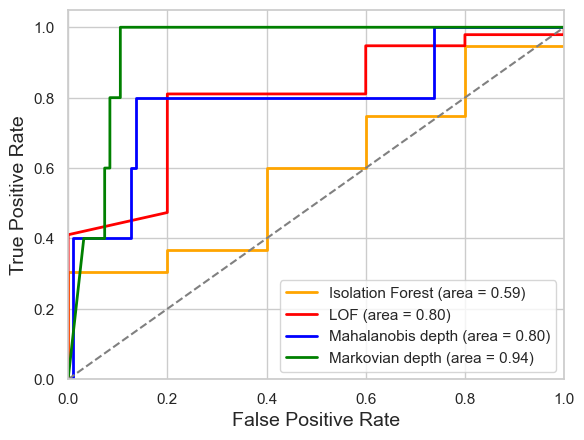}
        \caption{Faster arrivals (Dyn. an. I)}
    \end{subfigure}

    \begin{subfigure}[b]{0.49\textwidth}
        \centering
        \includegraphics[height=2.8cm, width=\textwidth]{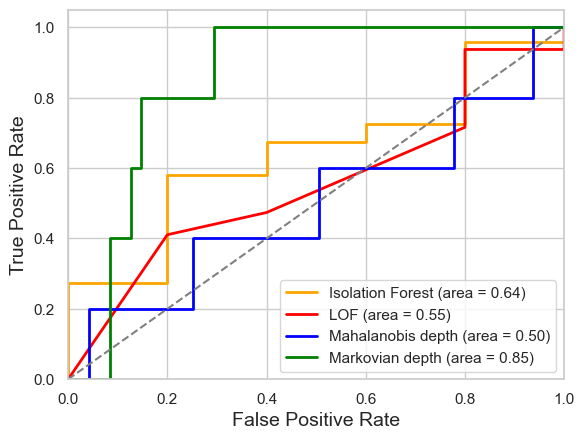}
        \caption{Slower services (Dyn. an. II)}
    \end{subfigure}
    \hfill
    \begin{subfigure}[b]{0.49\textwidth}
        \centering
        \includegraphics[height=2.8cm, width=\textwidth]{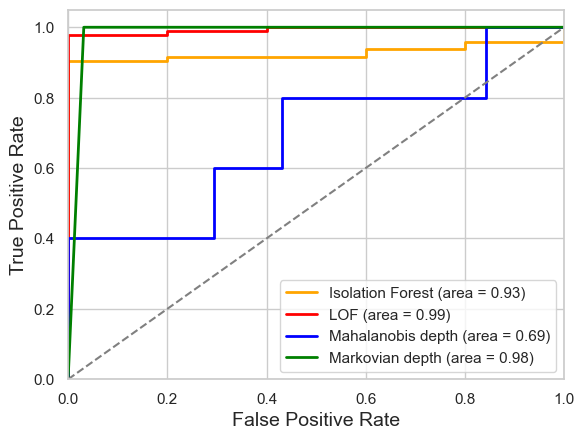}
        \caption{Deterministic arrivals (Shift)}
    \end{subfigure}

    \caption{\(\roc\) curves for each type of anomaly in the queuing model.}\label{fig:queue_roc_comparison}
\end{figure}

\vspace{-12pt}\section{Clustering Markov Paths}\label{sec:visualization_clustering}
\vspace{-5pt}The infinite dimension of Markov chains makes it challenging to visualize the data and
understand the underlying structure. As it was shown in Figure \ref{fig:trajectories} even in the case \(d=1\) and
constant path length, plotting the paths as a function of \(n\) is not very useful, as
the trajectories tend to occupy a vast part of the space due to the ergodicity. Markovian depth functions can be effectively utilized to develop a visual diagnostic tool
for Markovian paths, building upon the Depth vs. Depth plot (DD-plot), first introduced
in \cite{LiuPareliusSingh1999} for multivariate data. For two sets of Markov
trajectories\ \(\mathcal{X}=\{\x_1,\ldots,\x_{n_1}\}\) and
\(\mathcal{Y}=\{\y_1,\ldots,\y_{n_2}\}\) with corresponding kernel estimators
\(\hat{\Phi}\) and \(\hat{\Psi}\), the Markovian DD-plot is obtained by plotting in the
Euclidean plane the points \(\left\{\left(\DepthSeq[\hat{\Phi}](\x),\DepthSeq[\hat{\Psi}](\x)\right): \x\in\mathcal{X}\cup\mathcal{Y} \right\}.\)

To illustrate the use of DD-plots, we have generated 5 data sets, each one containing 50
trajectories of random lengths (between 50 and 200 steps). These datasets,
labeled \(\mathcal{X},\mathcal{Y}_a, \mathcal{Y}_b, \mathcal{Y}_c, \mathcal{Y}_d \), are
constructed following an ARCH(1) model with the parameters stated in Table
\ref{tab:parameters_arch_1}. Figure \ref{fig:arch_dd_plots}, shows the DD-plots of
\(\mathcal{X}\) (blue points) and \(\mathcal{Y}_i\) (yellow stars) for \(i=a,b,c,d\).
It can be readily seen that, the more ``similar'' the distributions of both sets are (in the
sense of the parameters used to generate the data), the more commingled the points in
the DD-plot are. Notice that the samples in \(\mathcal{Y}_a\) have the same distribution
as the samples in \(\mathcal{X}\), then, it is expected that the DD-plot shows a
straight line, see \ref{fig:arch_same_dist}. Conversely, the more the distributions
diverge, the more separated the points (see Fig. \ref{fig:arch_constant_vol}).

\begin{table}[ht]
    \caption{Parameters of the ARCH(1) model used in Figure \ref{fig:arch_dd_plots}}
    \vspace{5pt}
    \centering
    \label{tab:parameters_arch_1}
    \begin{tabular}{lcc}
        \toprule
        Dataset           & $m(x)$                  & $\sigma(x)$                    \\
        \midrule
        \(\mathcal{X}\)   & \({(1+\exp(-x))}^{-1}\) & \(\psi(x+1.2)+1.5\psi(x-1.2)\) \\
        \(\mathcal{Y}_a\) & \({(1+\exp(-x))}^{-1}\) & \(\psi(x+1.2)+1.5\psi(x-1.2)\) \\
        \(\mathcal{Y}_b\) & \((2+\exp(-x))^{-1}\)   & \(\psi(x+1.2)+1.5\psi(x-1.2)\) \\
        \(\mathcal{Y}_c\) & \((4+\exp(-x))^{-1}\)   & \(\psi(x+1.2)+1.5\psi(x-1.2)\) \\
        \(\mathcal{Y}_d\) & \({(1+\exp(-x))}^{-1}\) & 1                              \\
        \bottomrule
    \end{tabular}
\end{table}

\begin{figure}[ht]
    \centering
    \begin{subfigure}[b]{0.24\textwidth}
        \centering
        \includegraphics[height=3cm, width=\textwidth]{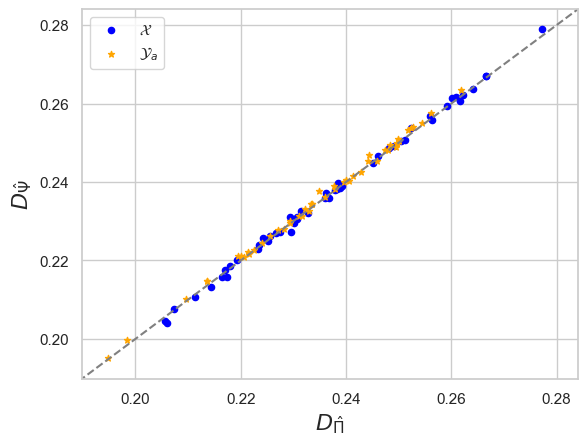}
        \caption{}\label{fig:arch_same_dist}
    \end{subfigure}
    \begin{subfigure}[b]{0.24\textwidth}
        \centering
        \includegraphics[height=3cm, width=\textwidth]{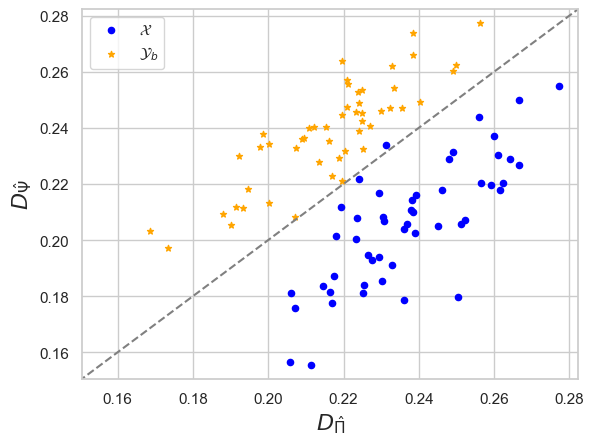}
        \caption{}
    \end{subfigure}
    \begin{subfigure}[b]{0.24\textwidth}
        \centering
        \includegraphics[height=3cm, width=\textwidth]{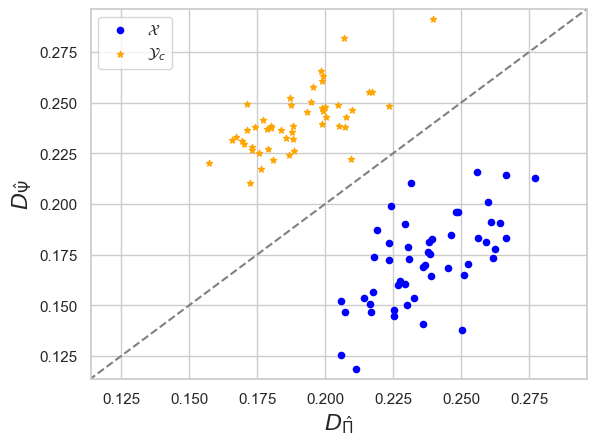}
        \caption{}
    \end{subfigure}
    \begin{subfigure}[b]{0.24\textwidth}
        \centering
        \includegraphics[height=3cm, width=\textwidth]{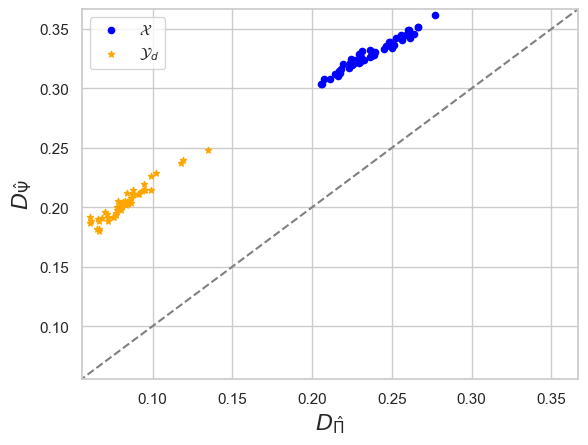}
        \caption{}\label{fig:arch_constant_vol}
    \end{subfigure}
    \caption{Markovian DD-plots for ARCH(1) with parameters described in Table \ref{tab:parameters_arch_1}.}\label{fig:arch_dd_plots}
\end{figure}


\vspace{-3pt}\section{Homogeneity Testing}\label{sec:homogeneity_testing}
The proposed Markovian depth can further be used to provide a formal inference, which we
exemplify as a nonparametric test of homogeneity between the distribution of two Markov
chains. To do this, we will use the depth-based Wilcoxon rank-sum test, proposed in
\cite{LiuSingh1993}. Consider two ARCH(1) models, one with \(m(x)={(1+\exp(-x))}^{-1}\) and
\(\sigma(x)=\psi(x+1.2)+1.5\psi(x-1.2)\), and the other with
\(m(x)={(1+\alpha+\exp(-x))}^{-1}\) and the same \(\sigma\) as the first one. For
different values of \(\alpha\), ranging from 0.3 (very similar distributions) to 1 (reasonably
different) we perform the aforementioned depth-based Wilcoxon rank-sum test, generating
50 trajectories of random length (between 50 and 200) for each chain and using a
reference trajectory of length 1000 from the first chain. Figure
\ref{fig:homogeneity_test} presents the \(p\)-values of the test for each \(\alpha\),
averaged over 50 repetitions. The \(p\)-values effectively identify the differences
between the two distributions when they exist.

\begin{figure}[ht]
    \centering
    \includegraphics[height=3cm, width=0.55\textwidth]{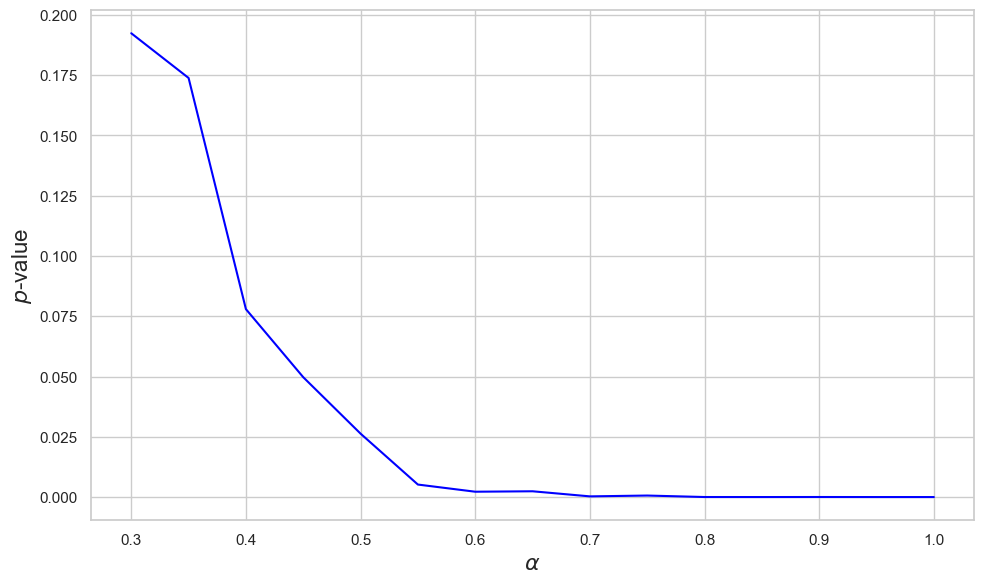}
    \caption{Average \(p\)-values (as a function of \(\alpha\)) for the homogeneity test. }\label{fig:homogeneity_test}
\end{figure}





\end{document}